\documentclass{article}
\usepackage{amsthm, amssymb}
\usepackage{amsmath}
\usepackage{amsfonts}
\usepackage{graphicx}

\newcommand{\R}{\mathbb{R}}
\newcommand{\Rn}{{\mathbb{R}^n}}
\newcommand{\B}{\mathbb{B}}
\newcommand{\Bn}{{\mathbb{B}^n}}

\newcommand{\Hn}{{\mathbb{H}^n}}
\newcommand{\Hnb}{{\mathbb{H}^n_b}}
\newcommand{\arcsinh}{\textrm{arc\,sinh\,}}
\newcommand{\arccosh}{\textrm{arc\,cosh\,}}

\newtheorem{theorem}{Theorem}[section]
\newtheorem{lemma}[theorem]{Lemma}         
\newtheorem{corollary}[theorem]{Corollary} 
\newtheorem{remark}[theorem]{Remark}       

\newtheorem{conjecture}[theorem]{Conjecture}
\newtheorem{proposition}[theorem]{Proposition}

\theoremstyle{remark}
\newtheorem{example}[theorem]{Example} 

\title{Hyperbolic type distances in starlike domains}
\author{Riku Kl\'en}
\date{}

\begin{document}

\maketitle           

\renewcommand{\theequation}{\thetheorem} 
                    
\makeatletter
\let \c@equation=\c@theorem

\begin{abstract} 
  We study the growth of hyperbolic type distances in starlike domains. We derive estimates for various hyperbolic type distances and consider the asymptotic sharpness of the estimates.
\end{abstract}

\noindent Keywords: hyperbolic type distance, starlike domain

\noindent MSC2010:  	30F45, 51M10, 30C65

\section{Introduction}

The hyperbolic distance has turned out to be a useful tool in geometric function theory. The basic models for the hyperbolic distance are the unit ball model and the upper half space model. Using these models in the plane case $n=2$, we can find the hyperbolic distance in any domain with at least 2 boundary points via the Riemann mapping theorem. In higher dimensions $n \ge 3$, there are no such results we could use to consider the hyperbolic distance in general domains. A solution to this is to use other distance functions, which approximate the hyperbolic distance and are easier to evaluate. We call this kind of distance functions hyperbolic type distances.

The study of hyperbolic distances was initiated four decades ago by Gehring, Palka, Martin and Osgood \cite{GehPal76,GehOsg79,MarOsg1986}. Thereafter many researchers have studied hyperbolic type metrics or used them as a tool in their work, see for example \cite{AndVamVuo1988,BeaMR1488447,KosNie2005,Vai2007}.

In this article we are interested in the growth of hyperbolic type distances in proper subdomains $G \subsetneq \Rn$. We consider the growth along a Euclidean line segment from $z' \in G$ to $z \in \partial G$. By a linear transformation we may assume that $z'=0$. To ensure that the line segment $[z',z)$ is in $G$ we restrict our study to starlike domains, which means that for every $y \in G$ the Euclidean line segment $[x,y]$ is contained in $G$.

Let $G \subset \Rn$ be a starlike domain and $z \in \partial G$. Let $y \in [0,z)$ and denote $t = |y|$. We study the behaviour of the function
\begin{equation}\label{function f(t)}
  f_m(t) = m(0,y) = m(0,tz/|z|),
\end{equation}
\begin{figure}[ht!]
  \begin{center}
    \includegraphics[width=.5\textwidth]{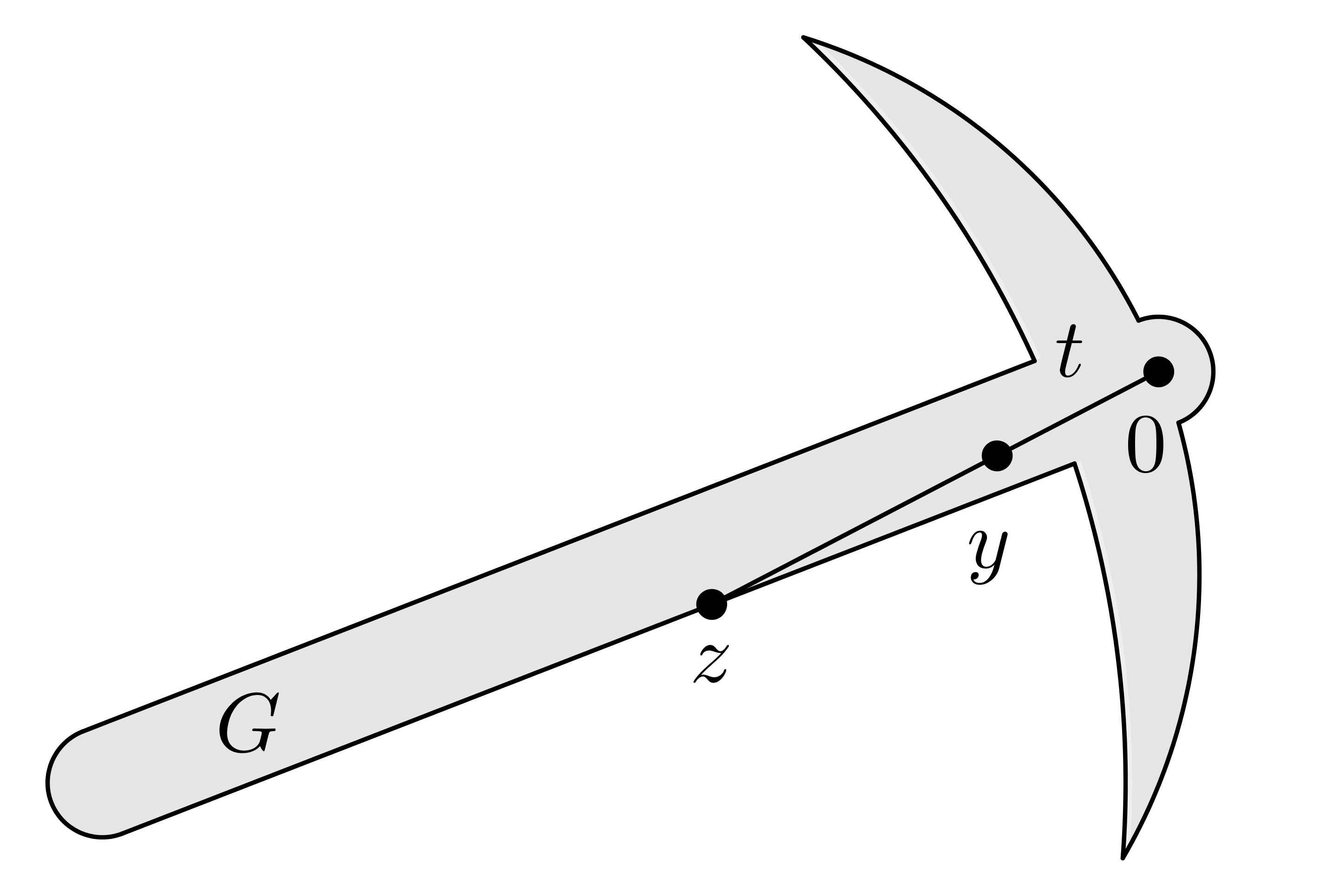}
    \caption{Starlike domain $G$ and the function $f_m(t) = m(0,tz/|z|) = m(0,y)$.}\label{fig:function f}
  \end{center}
\end{figure}

\noindent
where $m$ is a hyperbolic type distance, see Figure \ref{fig:function f}. Note that now $f_m$ is a continuous mapping from $[0,|z|)$ to $[0,\infty)$. To study this function we start of with an easier one
\begin{equation}\label{function g(t)}
  g(t) = d_G(y) = d(y,\partial G) = d(tz/|z|,\partial G).
\end{equation}

All the results of our study are true in a more general setting as long as the line segment $[0,z)$ is contained in the domain. However, we consider starlike domains as then this condition is clearly fulfilled.

Our main result is the following Schwarz lemma type theorem. For definition of the distances see the section named after the distance.

\begin{theorem}\label{thm:main theorem}
  Let $G \subsetneq \Rn$ be a domain with $0 \in G$ and $f_m(t)$ be the function defined in \eqref{function f(t)} for any $z \in \partial G$. For $m \in \{ j,k,\widetilde{\sigma}, c \}$
  \[
    f_m'(0) = \frac{1}{d_G(0)}
  \]
  and for $m \in \{ \alpha, \delta \}$
  \[
    \frac{1}{d_G(0)} \le f_m'(0) \le \frac{2}{d_G(0)},
  \]
  where the upper and lower bounds are best possible, and
  \[
    0 \le f_{\widetilde{\tau}}'(0) \le \frac{1}{d_G(0)}.
  \]
\end{theorem}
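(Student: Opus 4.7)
Since $f_m(0) = m(0,0) = 0$ for every distance under consideration, the derivative at $0$ (meaning the right-hand derivative, as $t \in [0,|z|)$) reduces to the one-sided limit
\[
  f_m'(0) = \lim_{t \to 0^+} \frac{m(0,\, tz/|z|)}{t}.
\]
The auxiliary function $g(t) = d_G(tz/|z|)$ from \eqref{function g(t)} is continuous with $g(0) = d_G(0) > 0$, so on a neighborhood of $0$ the distance to $\partial G$ stays bounded away from $0$, and $g(t) \to d_G(0)$ as $t \to 0^+$. This single continuity statement will drive every limit below.

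For each metric $m$, I plan to substitute $x = 0$ and $y = tz/|z|$ (so that $|x - y| = t$) directly into its definition and extract the leading-order term in $t$. For $m \in \{j,\widetilde{\sigma},c\}$, the definition is an explicit function of $|x-y|$ together with $d_G(x), d_G(y)$ or their minimum; using $\log(1+u) \sim u$ where appropriate and $g(t) \to d_G(0)$, the ratio $m(0,y)/t$ tends to $1/d_G(0)$. For the quasihyperbolic metric $k$, I would sandwich $j_G(0,y) \le k_G(0,y) \le \int_0^t ds/g(s)$: the lower bound was just handled, and the upper bound divided by $t$ is the mean of $1/g$ over $[0,t]$, which tends to $1/g(0) = 1/d_G(0)$ by continuity. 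This settles the equality cases.

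For $m \in \{\alpha,\delta\}$, the corresponding definitions involve $d_G(x)$ and $d_G(y)$ in a symmetric, additive way (of the flavour $|x-y|/(d_G(x)+d_G(y))$, or with both reciprocals contributing to a logarithmic expression). When divided by $t$ and passed to the limit, these yield values bracketed by $1/d_G(0)$ (when only one factor of $d_G$ effectively appears) and $2/d_G(0)$ (when both factors contribute additively); both bounds come out of a direct sup/inf comparison using $g(t) \to d_G(0)$. Sharpness of the endpoints is obtained by choosing extremal starlike domains: a half-space (where $g$ is linear) realises one extremal, and a ball or a suitably shaped starlike domain realises the other. The analysis of $\widetilde{\tau}$ is analogous; its definition contains a factor that can collapse as $y \to 0$, and the same scheme produces $0 \le f_{\widetilde{\tau}}'(0) \le 1/d_G(0)$, the lower bound being attained when the collapsing factor dominates and the upper bound when $\widetilde{\tau}$ behaves like $j$.

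The routine part of the argument is the limit computation itself, which uniformly reduces to $g(t) \to d_G(0)$. The main obstacle I foresee is the sharpness claim: for each of the outer values in the estimates for $\alpha,\delta,\widetilde{\tau}$, one must exhibit a concrete starlike domain and verify by explicit computation that $f_m'(0)$ attains the stated endpoint. This is not conceptually hard but requires a case-by-case choice of $G$ tailored to the asymmetry in each definition, together with an explicit first-order expansion of $g(t)$ in that domain.
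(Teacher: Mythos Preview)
Your overall strategy—compute $f_m'(0)=\lim_{t\to 0^+} m(0,tz/|z|)/t$ and then exhibit extremal domains for sharpness—is the same as the paper's. The paper organises it metric by metric: for each $m$ it proves two-sided pointwise bounds $l(t)\le f_m(t)\le u(t)$ on $[0,d_G(0))$ with explicit $l,u$, differentiates at $0$, and then the main theorem is literally a list of citations to those individual results. Your sandwich for $k_G$ via $j_G\le k_G\le \int_0^t ds/g(s)$ is exactly what the paper does.

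There is, however, a real gap in your description of the ``routine part''. You treat $\widetilde\sigma$, $c$, $\alpha$, $\delta$, $\widetilde\tau$ as explicit functions of $|x-y|$ and $d_G(x),d_G(y)$. They are not: each of $s_G$, $c_G$, $\alpha_G$, $\delta_G$, $v_G$ is defined as a supremum over one or two boundary points, and the reduction to expressions in $d_G(0)$ and $g(t)$ is itself the content of the argument. For $s_G$ and $c_G$ a triangle-inequality sandwich (e.g.\ $2d_G(0)-t\le |q|+|q-y|\le 2d_G(0)+t$ for a nearest $q$) recovers your conclusion, but this is precisely the step you have glossed over. For $\alpha_G$ the paper needs a separate lemma comparing $\sup_{a\in S^{n-1}(0,r)}|y-a|/|a|$ over spheres of different radii before $\partial G$ can be squeezed between $S^{n-1}(0,d_G(0))$ and a larger sphere; your phrase ``direct sup/inf comparison'' hides exactly this.

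The sharpness claims for $\alpha$ and $\delta$ are where your plan most seriously underestimates the work. A half-space and a ball do not suffice. The paper uses cone-complement domains $G_2=\Rn\setminus S_{\pi/4,x,2x}$ and a related domain $G_3$; showing that $\delta_{G_3}(0,y)$ equals the claimed expression requires locating the maximising pair $(a,b)\in\partial G_3\times\partial G_3$ over a boundary consisting of a cone and a hyperplane, and the paper spends roughly a page on that computation alone. You have correctly flagged sharpness as the main obstacle, but the concrete domains and the verification that the double supremum lands where you need it are not a first-order expansion of $g(t)$—they are the substance of the proof.
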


\section{Preliminaries and the hyperbolic distance, $\rho_G$}

In this section we introduce notation and consider examples of the hyperbolic distance in the upper half space and a ball.

For $a,b \in \Rn$ we denote the closed Euclidean line segment between the points by $[a,b] = \{ c \in \Rn \colon c=x+t(y-x), \, 0 \le t \le 1 \}$. We also use notation $(a,b)$, $[a,b)$ and $(a,b]$ for open and half-closed line segments in $\Rn$. We denote the smallest angle between line segments $[a,b]$ and $[b,c]$ by $\measuredangle(a,b,c)$.

We denote Euclidean balls and spheres with centre $x \in \Rn$ and radius $r > 0$, respectively, by $B^n(x,r)$ and $S^{n-1}(x,r)$. A domain $G \subsetneq \Rn$, $0 \in G$, is said to be starlike, if it is strictly starlike with respect to $0$: for every $z \in \partial G$ the line segment $[0,z)$ is contained in $G$. We say that distance function $m_G$ in $G \subsetneq \in \Rn$ is hyperbolic type, if $m_\Bn \approx \rho_\Bn$, where $\rho_\Bn$ is the hyperbolic distance in the unit ball defined in \eqref{eqn:rho in Bn}.

Let $b < 0$. The hyperbolic distance for all $u,w \in \Hnb = \{ a \in \Rn \colon a_n>b \}$ is defined as
\[
  f_{\rho_\Hnb}(t) = \rho_\Hnb (u,w) = \arccosh \left( 1+\frac{|u-w|^2}{2 |u_n-b| |w_n-b|} \right).
\]
Note that we use shifted version of the upper half space, as the usual upper half space $\Hn = \{ a \in \Rn \colon a_n>0 \}$ does not contain the origin.

\begin{example}\label{exa:rho in Hnb}
Let $z = (0,0,\dots,0,b) \in \partial G$ and $y \in [0,z)$ with $|y|=t$. We show that in this case $f_{\rho_\Hnb}'(0) = 1/d_\Hnb(0)$.

Now for $t \in (0,d_\Hnb(0))$ we have
\begin{eqnarray*}
  f_{\rho_\Hnb}(t) = \rho_\Hnb (0,y) & = & \arccosh \left( 1+\frac{|y|^2}{2 |b| |y_n-b|} \right)\\
  & = & \arccosh \left( 1+\frac{t^2}{2 d_\Hnb(0) (d_\Hnb(0)-t)} \right)
\end{eqnarray*}
and by differentiation we obtain
\[
  f_{\rho_\Hnb}'(t) = \frac{1}{d_\Hnb(0)-t}.
\]
We consider $f_{\rho_\Hnb}'(0)$ by taking the limit $f_{\rho_\Hnb}'(t) \to 1/d_\Hnb(0)$ as $t \to 0$. \hfill $\triangle$
\end{example}
Let $B=B^n(x,r)$ be a ball with $x \in \Rn$ and $r >0$. For $u,w \in B$ the hyperbolic distance is defined by
\begin{equation}\label{eqn:rho in Bn}
  \rho_B(u,w) = \arcsinh \frac{|u-w|/r}{\sqrt{1-|u|^2/r^2}\sqrt{1-|w|^2/r^2}}.
\end{equation}

\begin{example}\label{exa:rho in B}
Let $z \in \partial B$ and $x \in [0,z)$. We show that in this case $f_{\rho_B}'(0) = 1/d_B(0)$.

Now for $t \in (0,d_B(0))$
\begin{eqnarray*}
  f_{\rho_B}(t) = \rho_B (0,y) & = & \arcsinh \frac{|y|/r}{\sqrt{1-|y|^2/r^2}}\\
  & = & \arcsinh \frac{t/d_B(0)}{\sqrt{1-t^2/d_B(0)^2}}
\end{eqnarray*}
and $f_{\rho_B}'(t) = d_B(0)/(d_B(0)^2-t^2)$. Taking the limit we obtain $f_{\rho_B}'(t) \to 1/d_B(0)$ as $t \to 0$.  \hfill $\triangle$
\end{example}

Examples \ref{exa:rho in Hnb} and \ref{exa:rho in B} suggest that for hyperbolic type distances $f_m'(0) = a/d_G(0)$ could hold for $a \ge 0$ or perhaps even for $a=1$. It turns out that this conjecture is not true in general. However, based on our study in this article, it seems that the following is true
\begin{conjecture}
  For a hyperbolic type distance $m$ there exists constants $a \ge 0$ and $b \ge 1$ such that
\[
  \frac{a}{d_G(0)} \le f_m'(0) \le \frac{b}{d_G(0)}.
\]
\end{conjecture}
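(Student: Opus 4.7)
My plan is to treat each of the seven distances separately, using its defining formula (given in its own later section). For every metric $m$, the strategy follows the same three steps: substitute $y = tz/|z|$ into the formula for $m_G(0,y)$; express $f_m(t)$ in terms of $t$, $d_G(0)$, $d_G(y)$, and any suprema over $\partial G$ that appear; and expand to first order using the $1$-Lipschitz bound
\[
  d_G(0) - t \le d_G(y) \le d_G(0) + t,
\]
so that $d_G(y) = d_G(0) + O(t)$ as $t \to 0^+$. The derivative $f_m'(0)$ is then extracted as $\lim_{t \to 0^+} f_m(t)/t$, exactly in the spirit of the two worked examples for $\rho_G$ in the preliminaries.

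For the equalities $f_m'(0) = 1/d_G(0)$ with $m \in \{j, k, \widetilde{\sigma}, c\}$, I expect the defining formulas to have a structure (such as $\min\{d_G(x),d_G(y)\}$ in the denominator, or a $\log(1+\cdots)$ wrapper) in which the first-order variation of $d_G(y)$ drops out, leaving the leading term $t/d_G(0)$. For $k_G$ one additionally needs the standard sandwich $j_G(0,y) \le k_G(0,y)$ together with an upper bound obtained by integrating along the straight segment $[0,y]$, namely $k_G(0,y) \le \int_0^t ds/d_G(sz/|z|)$; these two bounds pinch the limit.

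For the inequalities $1/d_G(0) \le f_m'(0) \le 2/d_G(0)$ with $m \in \{\alpha, \delta\}$, the defining formulas involve boundary points more explicitly (through cross-ratios or infima over pairs in $\partial G$), so they are genuinely sensitive to the directional behaviour of $d_G$ along the segment. Since the one-sided directional derivative of $d_G$ at $0$ along $z/|z|$ must lie in $[-1,1]$, the resulting $f_m'(0)$ inherits a corresponding range. Sharpness at both endpoints is checked by exhibiting two starlike test domains: a half-space-type example in which $d_G(y) = d_G(0) + t$ (so the segment points away from the nearest boundary) realising one extreme, and a ball-type example in which $d_G(y) = d_G(0) - t$ (so the segment points directly at the nearest boundary point) realising the other. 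For $\widetilde{\tau}$ the lower bound $f_{\widetilde{\tau}}'(0) \ge 0$ is merely nonnegativity of a distance, and the upper bound $1/d_G(0)$ I expect to follow from a direct pointwise comparison with $j_G$ proved in the relevant section.

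The principal obstacle is not the first-order expansion itself, which is routine once each formula is in hand, but rather verifying sharpness of the inequalities for $\alpha$ and $\delta$: one has to keep track of how the extremising boundary points depend on $t$ and then construct explicit starlike domains in which both $1/d_G(0)$ and $2/d_G(0)$ are actually attained as limits. A secondary subtlety is justifying the one-sided differentiability at $t=0$ for the metrics that are only defined by suprema, which I would handle by squeezing $f_m(t)$ between two explicit expressions agreeing to first order in $t$.
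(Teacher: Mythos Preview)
The statement you have been asked to prove is a \emph{Conjecture} in the paper, not a theorem; the paper does not prove it and does not claim to. The conjecture concerns an \emph{arbitrary} hyperbolic type distance $m$, where ``hyperbolic type'' is defined in the preliminaries only by the loose comparability condition $m_{\Bn} \approx \rho_{\Bn}$. Your proposal, by contrast, is a proof sketch for Theorem~\ref{thm:main theorem}, which treats only the seven specific metrics $j,k,\widetilde\sigma,c,\alpha,\delta,\widetilde\tau$. Verifying the conjecture for seven examples is exactly the evidence the paper offers for stating the conjecture (``based on our study in this article, it seems that the following is true''), but it is not a proof of the conjecture itself. Nothing in your plan addresses a general $m$ satisfying only $m_{\Bn} \approx \rho_{\Bn}$, and indeed it is not clear how one would: such an $m$ has no formula to expand.

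If your actual target was Theorem~\ref{thm:main theorem}, then your outline matches the paper's approach closely: each metric gets its own section, $f_m(t)$ is squeezed between explicit functions of $t$ and $d_G(0)$ via the Lipschitz bound $d_G(0)-t\le d_G(y)\le d_G(0)+t$, and one differentiates the bounds and lets $t\to 0$. Two small discrepancies are worth noting. First, for $\widetilde\tau$ the paper does not compare with $j_G$; it obtains the upper bound by domain monotonicity, passing to $G'=B^n(0,d_G(0))$ and using the explicit formula $v_{\Bn}(0,u)=\arcsin|u|$. Second, your proposed sharpness examples for $\alpha$ and $\delta$ (a half-space and a ball) are not the ones the paper uses and would in fact be problematic for $\alpha_G$, since the Apollonian metric requires $\partial G$ not to lie on a sphere in $\overline{\Rn}$; the paper instead uses the cone-complement domains $G_2$, $G' = \Rn\setminus(S_{\pi/4,-x,-2x}\cup S_{\pi/4,x,2x})$, and $G_3$ to realise the endpoints $1/d_G(0)$ and $2/d_G(0)$.
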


\section{Distance to the boundary function, $d_G$}\label{sec:distance to the boundary}

In this section we study the problem for the distance to the boundary function $g(t)$ formulated in \eqref{function g(t)}. Let us begin our study with few simple planar examples, which are easy to reconstruct in higher dimensions.

\begin{example}\label{exm:simple domain}
We consider starlike domain
\[
  G = \R^2 \setminus R_x, \quad R_x = \{ w = (w_1,w_2) \in R^2 \colon w_1 \ge x_1, \, w_2 \ge x_2 \},
\]
where $x = (x_1,x_2) \in \R^2$ with $x_1,x_2 > 0$. For a given $z = z(\alpha)$ we derive a formula for $g(t)$ and show that $g(t) = (|z|-t) \sin \alpha$ and thus linear for large values of $t$. We choose $z = (x_1,y_2)$ for $z_2 \ge x_2$ and note that $z \in \partial G$.

\begin{figure}[ht!]
  \begin{center}
    \includegraphics{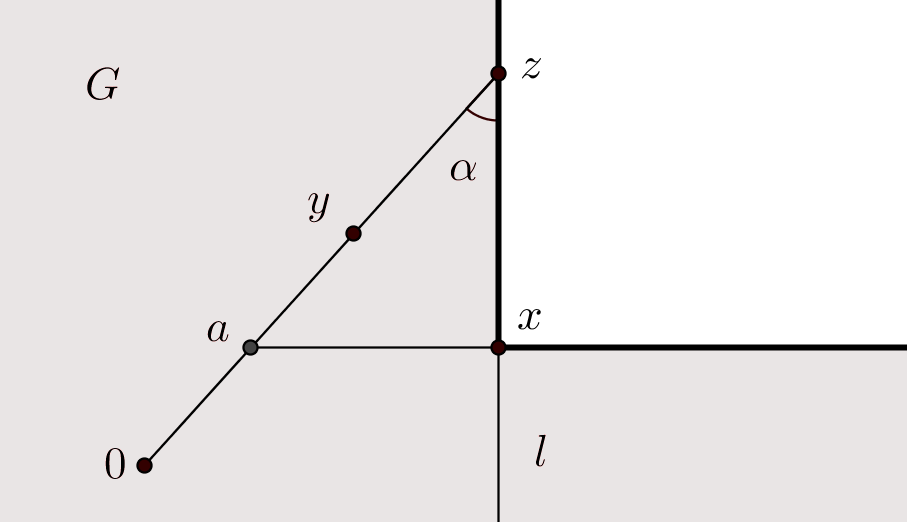}\vspace{3mm}
    \includegraphics[scale=.19]{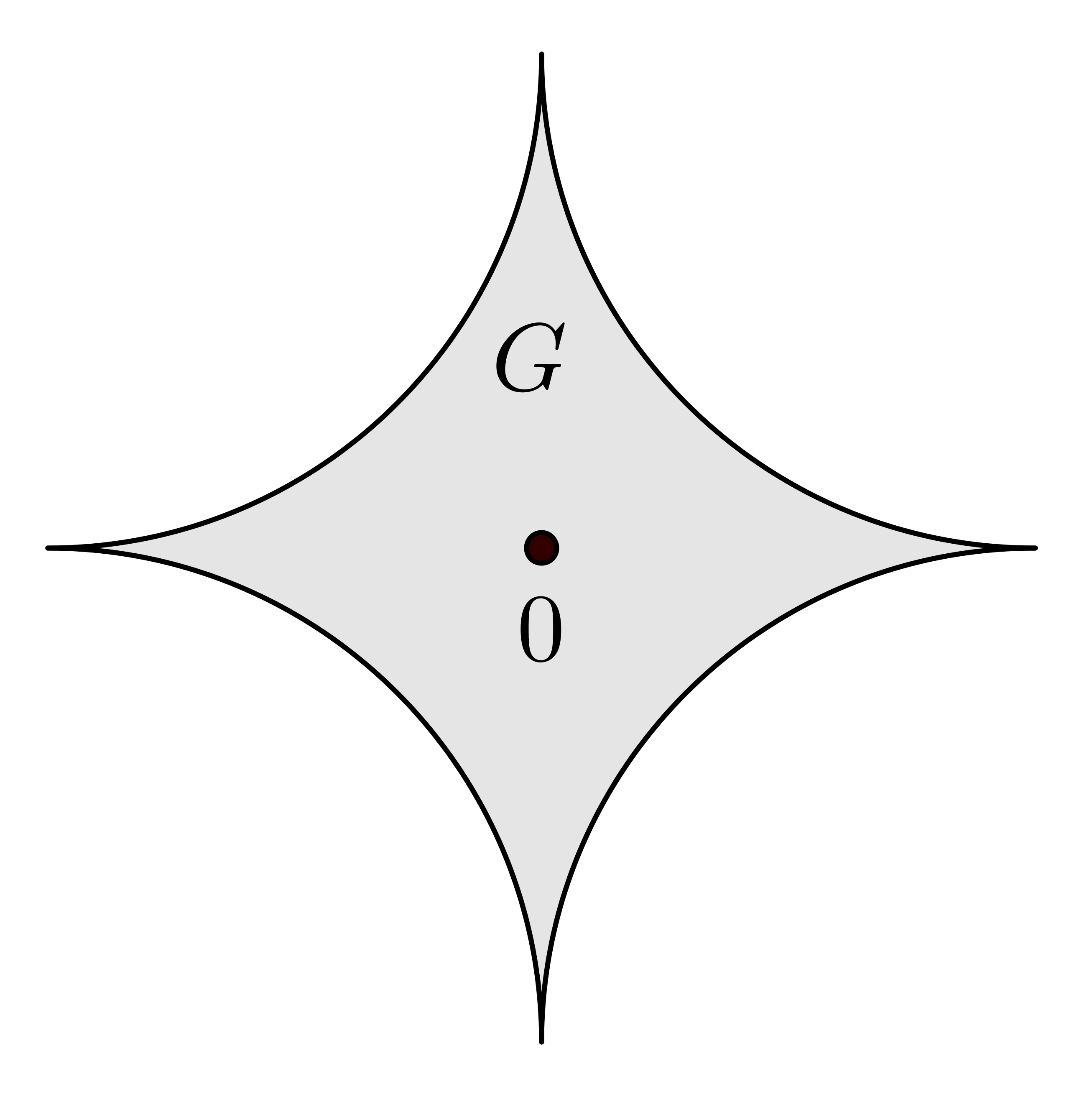}
    \caption{The domains $G$ of Examples \ref{exm:simple domain} (on left) and \ref{exm:circular domain} (on right).}\label{fig:simple example}
  \end{center}
\end{figure}

When $t$ is small, then $y$ is close to $0$ and $g(t) = |y-x|$. To be more explicit we can write
\[
  g(t) = d_G(y) = |y-x| = \sqrt{|a-x|^2 + |a-y|^2}, \quad \textrm{if }t \le |a|,
\]
where $a$ is the point in $[0,z]$, with $a_2 = x_2$, see Figure \ref{fig:simple example}.

If $t$ is faraway from $0$, the we have
\[
  g(t) = d(y,l) = |y-z| \sin \alpha,\quad \textrm{if } t \ge |a|,
\]
where $l = \{ (x_1,t) \colon t \in \R \}$ and $\alpha = \measuredangle (y,z,x)$.

Next we want to express our function $g(t)$ in terms of $t$ and points $x$ and $z$. We easily obtain $|y-z| = |z|-t$, $|a-z| = |x-z| \cos \alpha$, $|a-x| = |x-z| \sin \alpha$, $|a-y| = |z|-t-|a-z|$ and
\[
  |a| = |z|-|a-z| = |z| - |x-z| \cos \alpha.
\]
Putting all together gives
\[
  g(t) = \left\{ \begin{array}{ll}
    \sqrt{t^2 -2t(|z|-|a-z|) + |z|(|z|-2 |a-z|) + |x-z|^2}, & \textrm{if } t \le |a|,\\
    (|z|-t) \sin \alpha, & \textrm{if } t > |a|,
  \end{array} \right.
\]
and here $|a-z| = |x-z| \cos \alpha$. \hfill $\triangle$
\end{example}

\begin{example}\label{exm:circular domain}
  Let us consider starlike domain $G=\B^2 \setminus \overline{(B^2(1+i,1)} \cup \overline{B^2(-1+i,1)} \cup \overline{B^2(-1-i,1)} \cup \overline{B^2(1-i,1))}$ and $z = 1 \in \partial G$, see Figure \ref{fig:simple example}.
  
  For $y \in [0,z)$ and $t =|y| \in [0,1)$ we have
  \[
    g(t) = d_G(y) = |y-(1+i)|-1 = \sqrt{t^2-2t+2}-1.
  \]
  Now
  \[
    g'(t) = \frac{t-1}{\sqrt{t^2-2t+2}}
  \]
  implying $g'(t) \to 0$ as $t \to 1$. \hfill $\triangle$
\end{example}

\begin{example}\label{exm:polynomial domain}
  In Example \ref{exm:circular domain} we could place the circular arc with any decreasing function: First define a decreasing function $h \colon [0,1] \to [0,1]$ with $h(0)=1$ and $h(1)=0$. Then reflect the function across the real axis and then reflect both function across the imaginary axis.
  
  For $p \in \{ 1,2,\dots \}$ we can define polynomial function $h(t) = (1-t)^p$ to obtain a domain $G$. Now for $z = 1 \in \partial G$ we have
  \[
    g(t) = \sqrt{(1-t)^{2p}+(1-t)^{4p-2}}.
  \]
  and since $\sqrt{(1-t)^{2p}+(1-t)^{4p-2}} \ge \sqrt{(1-t)^{2p}} = (1-t)^p$ we obtain
  \begin{eqnarray*}
    g'(t) & = & \frac{-2p(1-t)^{2p-1}-(4p-2)(1-t)^{4p-3}}{2\sqrt{(1-t)^{2p}+(1-t)^{4p-2}}}\\
    & \le & \frac{-2p(1-t)^{2p-1}-(4p-2)(1-t)^{4p-3}}{2 (1-t)^p}\\
    & = & -p(1-t)^{p-1}-(2p-1)(1-t)^{3p-3} \to 0
  \end{eqnarray*}
  as $t \to 1$. \hfill $\triangle$
\end{example}

Let us then consider general case $G \subsetneq \Rn$. How quickly and how slowly $g(t)$ can decrease for large values of $t$? How quickly and how slowly $g(t)$ can increase for $t$ close to $0$? Before considering the bounds for $g(t)$, we introduce angular domain.

For $x,y \in \Rn$ and $\alpha \in (0,\pi)$ we define
\[
  S_{\alpha,x,y} = \{ z \in \Rn \colon \measuredangle(y,x,z) \le \alpha \}.
\]

If $t$ is close to $0$, then the slowest growth for $g(t)$ occurs in the following case:
\begin{equation}\label{eqn:domain G1}
  G_1=\Rn \setminus (S_{\pi/4,-x,-2x} \cup S_{\pi/4,3x,4x})
\end{equation}
for some $x \in \Rn \setminus \{ 0 \}$ and $z=3x$, see Figure \ref{fig:domain G2}. Now for $t \in [0,|x|]$, we have
\begin{equation}\label{eqn:upper bound for d(0)}
  g(t) = d_G(0)+t = |x|+t.
\end{equation}

\begin{figure}[ht!]
  \begin{center}
    \includegraphics{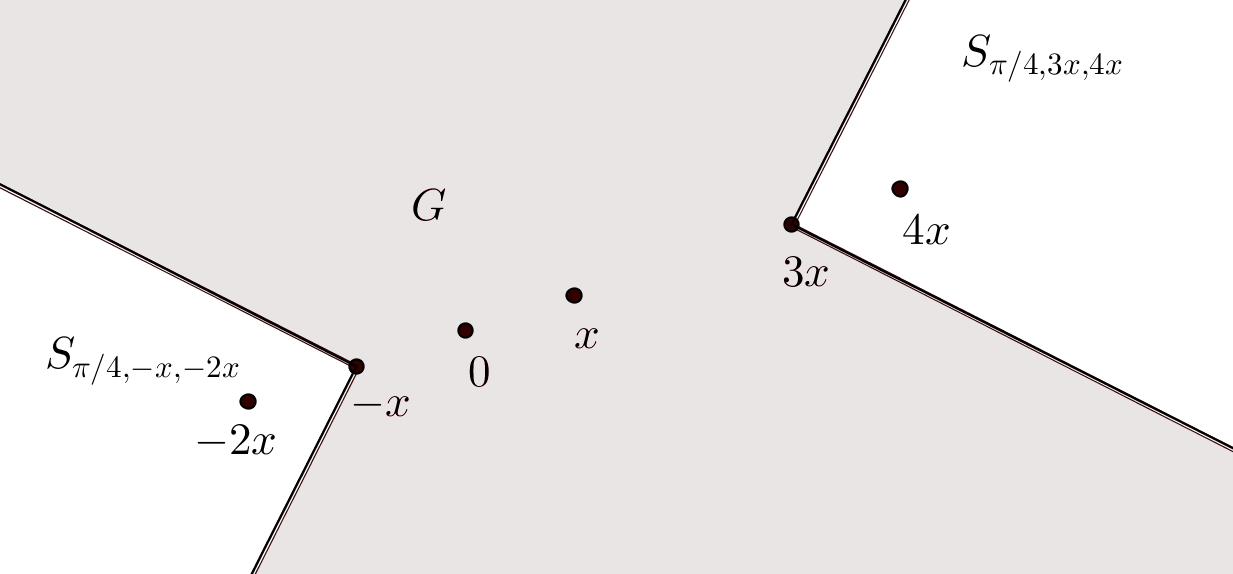}
    \caption{The domain $G_1=\Rn \setminus (S_{\pi/4,-x,-2x} \cup S_{\pi/4,3x,4x})$ defined in \eqref{eqn:domain G1} for the extremal case, when $t$ is close to $0$.}\label{fig:domain G2}
  \end{center}
\end{figure}

If $t$ is faraway from the boundary, then the slowest growth occurs in the case
\begin{equation}\label{eqn:domain G2}
  G_2=\Rn \setminus S_{\pi/4,x,2x}
\end{equation}
for any $x \in \Rn \setminus \{ 0 \}$ and $z=x$. Now
\begin{equation*}
  g(t) = d_G(0)-t = |x|-t
\end{equation*}
for $t \in [0,|z|)$.

As we will later see, it turns out that the domains $G_1$ and $G_2$ defined respectively in \eqref{eqn:domain G1} and \eqref{eqn:domain G2} can be used as extremal domains for many hyperbolic type distances. Note that if $G$ was not starlike, we could use $G_1' = \Rn \setminus \{ -x,3x \}$ and $G_2' = \Rn \setminus \{ x \}$ instead of $G_1$ and $G_2$.

Next we consider how quickly $g(t)$ can decrease. Close to the origin the fastest decrement occurs in the domain $G=B^n(0,d_G(0))$ and it is
\begin{equation}\label{eqn:upper bound for d(|x|)}
  g(t) = d_G(0)-t
\end{equation}
for $t \in [0,d_G(0))$.
 
Faraway from the origin the decrement can made arbitrarily slow as can be observed from Examples \ref{exm:simple domain}, \ref{exm:circular domain} and \ref{exm:polynomial domain}. We have arrived to the following theorem:

\begin{theorem}\label{thm:distance to the boundary estimates}
  For any starlike domain $G$ and point $z \in \partial G$ we have
  \[
    g'(t) \le 1, \quad \textrm{for } t \in[0,|z|).
  \]
  Moreover,
  \[
    -1 \le g'(0) \le 1 \quad \textrm{and} \quad -1 \le \lim_{t \to |z|} g'(t) \le 0.
  \]
\end{theorem}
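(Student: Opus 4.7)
The plan is to reduce every statement in the theorem to the standard $1$-Lipschitz property of the distance-to-boundary function: $|d_G(u) - d_G(v)| \le |u-v|$ for all $u,v \in G$. I would first verify this quickly by the triangle inequality: for any $\xi \in \partial G$, $d_G(u) \le |u-\xi| \le |u-v|+|v-\xi|$, so taking the infimum over $\xi$ gives $d_G(u) - d_G(v) \le |u-v|$, and the reverse inequality is symmetric.

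Since $t \mapsto tz/|z|$ is an arc-length parameterisation, this bound transfers verbatim to $g$: $|g(t_1)-g(t_2)| \le |t_1-t_2|$ for all $t_1,t_2 \in [0,|z|)$. Hence $g$ is $1$-Lipschitz and differentiable almost everywhere, with $|g'(t)| \le 1$ at every point of differentiability. This yields simultaneously the bound $g'(t) \le 1$ on the whole interval, the full range $-1 \le g'(0) \le 1$, and the lower estimate $-1 \le \lim_{t \to |z|} g'(t)$. Since $d_G$ is in general only Lipschitz, $g'$ need not exist pointwise everywhere, so these statements should be understood in the almost-everywhere (or, if preferred, one-sided Dini) sense.

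The only remaining point is the upper bound $\lim_{t \to |z|} g'(t) \le 0$. Since $y = tz/|z| \to z \in \partial G$ as $t \to |z|^{-}$, we have $g(t) = d_G(y) \le |y-z| = |z|-t \to 0$, so $g(t) \to 0$ through positive values. If the derivative had a strictly positive limit $L > 0$, then $g'(t) > L/2$ on some left neighbourhood $(|z|-\varepsilon,|z|)$, forcing $g$ to be strictly increasing there; this contradicts $g(t) \to 0$ with $g$ positive throughout the neighbourhood. The only real obstacle in the argument is the mild differentiability caveat above; the sharpness of all four bounds is witnessed by the extremal domains $G_1$ and $G_2$ of \eqref{eqn:domain G1}--\eqref{eqn:domain G2}, the ball $B^n(0,d_G(0))$ giving \eqref{eqn:upper bound for d(|x|)}, and Examples \ref{exm:circular domain} and \ref{exm:polynomial domain}.
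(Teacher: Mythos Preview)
Your proof is correct, and it is in fact more rigorous than the paper's own treatment. The paper does not give a formal proof of this theorem: the statement is reached through the preceding discussion, which exhibits specific extremal domains ($G_1$, $G_2$, the ball $B^n(0,d_G(0))$) and Examples~\ref{exm:simple domain}--\ref{exm:polynomial domain} where the various bounds are attained, and then asserts the bounds on that basis. You take a different route: you derive the inequalities directly from the $1$-Lipschitz property of $d_G$, which forces $|g'(t)|\le 1$ wherever the derivative exists, and you add a short contradiction argument for $\lim_{t\to|z|^-} g'(t)\le 0$ using $g(t)\to 0^+$. Your approach has the advantage of actually \emph{proving} the inequalities for an arbitrary starlike domain rather than inferring them from extremal examples, while the paper's discussion has the complementary advantage of simultaneously establishing sharpness---which you then borrow at the end by citing the same domains. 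Your caveat about almost-everywhere differentiability is well taken and is a point the paper passes over in silence.
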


\section{Distance ratio distance, $j_G$}

In this section we estimate $f_j(t)$ for the distance ratio distance $j_G$, which is defined in any open subset $G \subsetneq \Rn$ for points $u,w \in G$ by
\[
  j_G(u,w) = \log \left( 1+\frac{|u-w|}{\min \{ d_G(u),d_G(w) \}} \right).
\]
The distance ratio distance was introduced by Vuorinen in the 1980's \cite{Vuo1985} and in a slightly different form by Gehring and Osgood \cite{GehPal76}.

For $f_j(t)$ we can use the same domains as for $g(t)$ to consider the extremal cases. Note that the following result is true also in non starlike domains.

\begin{theorem}\label{thm:distance ratio estimates}
  Let $G \subsetneq \Rn$ be a starlike domain and $z \in \partial G$. For the distance ratio distance $j_G$ we have $f_j'(0) = 1/d_G(0)$,
  \[
    f_j(t) \ge \log \left( 1+ \frac{t}{d_G(0)} \right) \quad \textrm{for all }t \in [0,|z|),
  \]
  and
  \[
    f_j(t) \le \log \left( \frac{d_G(0)}{d_G(0)-t)} \right) \quad \textrm{for all }t \in [0,d_G(0)).
  \]
\end{theorem}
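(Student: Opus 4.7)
The plan is to expand the definition: since $y = tz/|z|$ has $|y| = t$, we have
\[
  f_j(t) = j_G(0,y) = \log \left( 1 + \frac{t}{\min\{d_G(0), d_G(y)\}} \right),
\]
so the entire question reduces to sandwiching the denominator $\min\{d_G(0), d_G(y)\}$ between $d_G(0) - t$ and $d_G(0)$. From that sandwich I will read off the two integral estimates directly, and then recover the value of $f_j'(0)$ by a squeeze argument on the difference quotient at the origin.

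For the lower bound, the trivial inequality $\min\{d_G(0), d_G(y)\} \le d_G(0)$ enlarges the argument of the logarithm and yields $f_j(t) \ge \log(1 + t/d_G(0))$ on the whole interval $[0, |z|)$, without any starlikeness needed.

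For the upper bound, the key input is that $d_G$ is $1$-Lipschitz, which is essentially what Theorem~\ref{thm:distance to the boundary estimates} encodes (together with the elementary fact that $g'(t) \ge -1$, a standard consequence of the triangle inequality for $d_G$). Applying this to the point $y$ gives $d_G(y) \ge d_G(0) - t$, so that $\min\{d_G(0), d_G(y)\} \ge d_G(0) - t$, which is positive precisely when $t < d_G(0)$. Substituting this into the expression for $f_j(t)$ and simplifying $1 + t/(d_G(0) - t) = d_G(0)/(d_G(0) - t)$ gives the stated upper bound on $[0, d_G(0))$.

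For the derivative at the origin, note that both bounds vanish at $t = 0$ and are differentiable there with common slope $1/d_G(0)$. Since $f_j(0) = 0$, dividing the two-sided estimate by $t > 0$ and letting $t \to 0^+$ forces $f_j(t)/t \to 1/d_G(0)$, which is the claimed one-sided derivative. The only mild subtlety is that the active branch of the $\min$ may switch as $t$ increases along $[0,z)$; this complicates the exact form of $f_j$ but is invisible to the argument above, since both sandwich bounds hold unconditionally. So the main obstacle here is really just bookkeeping, not substance.
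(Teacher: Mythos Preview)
Your argument is correct and follows essentially the same route as the paper: sandwich $\min\{d_G(0),d_G(y)\}$ between $d_G(0)-t$ and $d_G(0)$ using the trivial bound and the $1$-Lipschitz property of $d_G$, then squeeze to get $f_j'(0)=1/d_G(0)$. Your treatment of the upper bound is in fact slightly cleaner than the paper's, since you bound the minimum directly by $d_G(0)-t$ rather than first assuming $d_G(y)\le d_G(0)$.
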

\begin{proof}
  Let us first consider lower bound for $f_j(0)$. We denote a closest boundary $x \in \partial G$ with $d_G(0) = |0-x|$ and $z \in \partial G$. Now $y \in [0,|z|)$ with $|y|=t$ we have
  \begin{eqnarray*}
    f_j(t) & = & j_G(0,y) = \log \left( 1+\frac{|y|}{\min \{ d_G(0),d_G(y)  \}} \right)\\
    & \ge & \log \left( 1+\frac{t}{d_G(0)} \right) := l(t).
  \end{eqnarray*}
  This is obtained for example as in \eqref{eqn:upper bound for d(0)} in the domain $G_1=\Rn \setminus (S_{\pi/4,-x,-2x} \cup S_{\pi/4,3x,4x})$ for some $x \in \Rn \setminus \{ 0 \}$ and $z=3x$.
  
  Let us then consider upper bound for $f_j(0)$. Now as $d_G(0)$ is a constant we may assume $d_G(y) \le d_G(0)$ and obtain by \eqref{eqn:upper bound for d(|x|)}
  \begin{eqnarray*}
    f_j(t) & = & \log \left( 1+\frac{|y|}{\min \{ d_G(0),d_G(y)  \}} \right) \le \log \left( 1+\frac{t}{d_G(y)} \right)\\
    & = & \log \left( 1+\frac{t}{d_G(0)-t} \right) =\log \left( \frac{d_G(0)}{d_G(0)-t)} \right) =: u(t).
  \end{eqnarray*}
  This situation is obtained in the domain $G'=B^n(0,d_G(0))$, because for any $G$ we have $G' \subset G$.
  
  The upper and lower bounds of $f_j(t)$ give us
  \[
    \lim_{t \to 0} l'(t) = \lim_{t \to 0} \frac{1}{d_G(0)+t} \le f_j'(0) \le \lim_{t \to 0} u'(t) = \lim_{t \to 0} \frac{1}{d_G(0)-t}
  \]
  which yields $f_j'(0) = 1/d_G(0)$.
\end{proof}

Note that the lower bound for $f_j(t)$ in Theorem \ref{thm:distance ratio estimates} works for all $t$ whereas the upper bound holds only close to the origin. As a matter of fact, by Theorem \ref{thm:distance to the boundary estimates}, the function $g'(t)$ can be arbitrarily close to zero near the boundary and therefore we cannot find an upper bound for $f_j(t)$ in this case.

\section{Quasihyperbolic distance, $k_G$}

Next we consider the quasihyperbolic distance. Let $G \subsetneq \Rn$ be a domain. We define the quasihyperbolic length of a curve $\gamma \subset G$ by
\[
  \ell_k(\gamma) = \int_\gamma \frac{|du|}{d_G(u)}.
\]
For $u,w \in G$ the quasihyperbolic distance between $u$ and $w$ is define by
\[
  k_G(u,w) = \inf_{\gamma_{uw}} \ell(\gamma_{uw}),
\]
where the infimum is taken over all rectifiable curves $\gamma_{uw}$ joining $u$ and $w$ in $G$. The quasihyperbolic distance was introduced in the 1970's by Gehring and Palka \cite{GehPal76}.

\begin{theorem}\label{thm:quasihyperbolic estimates}
  Let $G \subsetneq \Rn$ be a starlike domain and $z \in \partial G$. For the quasihyperbolic distance $k_G$ we have $f_k'(0) = 1/d_G(0)$,
  \[
    f_k(t) \ge \log \left( 1+ \frac{t}{d_G(0)} \right) \quad \textrm{for all }t \in [0,|z|),
  \]
  and
  \[
    f_k(t) \le \log \left( \frac{d_G(0)}{d_G(0)-t)} \right) \quad \textrm{for all }t \in [0,d_G(0)).
  \]
\end{theorem}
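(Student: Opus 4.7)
The plan is to mirror the proof of Theorem \ref{thm:distance ratio estimates}, exploiting the standard inequality $k_G \ge j_G$ for the lower bound and an explicit candidate curve (the line segment $[0,y]$) for the upper bound.

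First I would establish the lower bound. The well-known pointwise inequality $k_G(u,w) \ge j_G(u,w)$ holds in any proper subdomain of $\Rn$: for any rectifiable curve $\gamma$ joining $u$ and $w$, parametrising by arc length shows $d_G(\gamma(s)) \le d_G(u)+s$, hence $\ell_k(\gamma) \ge \int_0^{\ell(\gamma)} ds/(d_G(u)+s) = \log(1+\ell(\gamma)/d_G(u)) \ge \log(1+|u-w|/d_G(u))$, and symmetrically for $w$. Taking an infimum over $\gamma$ and combining the two estimates yields $k_G \ge j_G$. Applied to $u=0$, $w=y$, Theorem \ref{thm:distance ratio estimates} then gives
\[
  f_k(t) = k_G(0,y) \ge j_G(0,y) = f_j(t) \ge \log\left(1+\frac{t}{d_G(0)}\right)
\]
for all $t \in [0,|z|)$.

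Next I would establish the upper bound. For $t \in [0,d_G(0))$ the line segment $[0,y]$ lies in $G$, since $B^n(0,d_G(0)) \subset G$ (starlikeness is not even needed here, only $t < d_G(0)$). Along this segment, for any $s \in [0,t]$ the point $sy/|y|$ satisfies $B^n(sy/|y|, d_G(0)-s) \subset B^n(0,d_G(0)) \subset G$, so $d_G(sy/|y|) \ge d_G(0) - s$. Using $[0,y]$ as a candidate curve in the definition of $k_G$ gives
\[
  f_k(t) \le \int_0^t \frac{ds}{d_G(sy/|y|)} \le \int_0^t \frac{ds}{d_G(0)-s} = \log\left(\frac{d_G(0)}{d_G(0)-t}\right).
\]

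Finally, since $f_k(0)=0$, the two bounds together yield
\[
  \frac{\log(1+t/d_G(0))}{t} \le \frac{f_k(t)}{t} \le \frac{-\log(1-t/d_G(0))}{t}
\]
for $t>0$ small, and both outer quotients tend to $1/d_G(0)$ as $t \to 0^+$, so $f_k'(0) = 1/d_G(0)$ by a squeeze. The argument is essentially routine once one has Theorem \ref{thm:distance ratio estimates}; the only substantive step is noting that the radial segment is an admissible quasihyperbolic test curve with a clean lower bound on $d_G$, which is available precisely in the regime $t < d_G(0)$ that the theorem asks about.
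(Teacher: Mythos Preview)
Your proof is correct and follows essentially the same approach as the paper: both arguments use the $1$-Lipschitz property of $d_G$ (you package it as $k_G \ge j_G$, the paper estimates the defining integral directly) for the lower bound, and both use the inclusion $B^n(0,d_G(0)) \subset G$ for the upper bound (you take the radial segment as a test curve with $d_G \ge d_G(0)-s$, the paper phrases it as domain monotonicity $k_G \le k_{G'}$ for $G'=B^n(0,d_G(0))$). The derivative at $0$ is then obtained by the same squeeze in both.
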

\begin{proof}
  We start by finding a lower bound for $f_k(t)$. Let $z \in \partial G$ and $y \in [0,z)$. For $t=|z|$ we estimate  
  \begin{eqnarray*}
    f(t) & = & k_G(0,y) = \int_\gamma \frac{|du|}{d_G(u)} \ge \int_{[0,y]} \frac{|du|}{d_G(0)+u} = \int_0^t \frac{du}{d_G(0)+u}\\
    & = & \log(d_G(0)+t)-\log d_G(0) = \log \left( 1+\frac{t}{d_G(0)} \right).
  \end{eqnarray*}
  For $G' = B^n(0,d_G(0))$ and $y \in [0,d_G(0))$ clearly $d_{G'}(0) = d_G(0)$ and we obtain
  \begin{eqnarray*}
    f(t) & = & k_G(0,tz/|z|) \le k_{G'}(0,tz/|z|) = \int_0^t \frac{ds}{d_G(0)-s}\\
    & = & -\log(d_G(0)-t)+\log d_G(0) = \log \frac{d_G(0)}{d_G(0)-t}.
  \end{eqnarray*}
  The lower and the upper bounds of $f_k(t)$ are equal to the corresponding bounds for $f_j(t)$ and the expression for $f_k'(0)$ follows as in the proof of Theorem \ref{thm:distance ratio estimates}.
\end{proof}

Exactly same observation for $f_k(t)$ near the boundary can be made that we made for the distance ratio distance. Theorem \ref{thm:quasihyperbolic estimates} gives a lower bound and an upper bound can not be obtained.

\section{Triangular ratio distance, $s_G$ ($\sigma_G$)}

For a domain $G \subsetneq \Rn$ and points $u,w \in G$ we define the triangular ratio distance by
\[
  s_G(u,w) = \sup_{q \in \partial G} \frac{|u-w|}{|u-q|+|q-w|}.
\]
Geometrically the supremum is attained at a point $q$ such that it is either on line segment $[u,w]$ or if this is not possible, then $q$ is on the largest ellipsoid with focii $u$ and $w$ contained in $G$. The triangular ratio distance was introduced by H\"ast\"o in the 2000's \cite{Hasto2002}. Since $s_G \in [0,1]$ we observe that $s_G$ is not hyperbolic type. For $u,w \in G$ we define
\[
  \sigma_G(u,w) = \tan \frac{\pi s_G(u,w)}{2}
\]
to obtain a hyperbolic type distance.

As in the case of the distance to the boundary function and the distance ratio distance we can use the same extremal domains. The domain $G_1=\Rn \setminus (S_{\pi/4,-x,-2x} \cup S_{\pi/4,3x,4x})$ for some $x \in \Rn \setminus \{ 0 \}$ and $z=3x$ gives lower bound
\begin{equation}\label{s lower bound}
  f_s(t) \ge \frac{t}{d_G(0)+(d_G(0)+t)} = \frac{t}{2 d_G(0)+t}
\end{equation}
for $t \in [0,|z|)$.

The domain $G_2=\Rn \setminus S_{\pi/4,x,2x}$ for any $x \in \Rn \setminus \{ 0 \}$ and $z=x$ gives upper bound
\begin{equation}\label{s upper bound}
  f_s(t) \le \frac{t}{d_G(y)+(d_G(y)-t)} = \frac{t}{2d_G(0)-t}
\end{equation}
for $t \in [0,d(0))$.

Combining these estimates we obtain:

\begin{theorem}\label{thm:sigma distance}
  Let $G \subsetneq \Rn$ be a starlike domain and $z \in \partial G$. Then
  \[
    f_\sigma(t) \ge \tan \frac{\pi t}{4 d_G(0)+2t}
  \]
  for $t \in [0,|z|)$ and
  \[
    f_\sigma(t) \le \tan \frac{\pi t}{4d_G(0)-2t}
  \]
for $t \in [0,d(0))$. Moreover, $f_\sigma'(0) = \pi/(4d_G(0))$.
\end{theorem}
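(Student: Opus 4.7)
The plan is to leverage the two inequalities for $f_s(t)$ already established in \eqref{s lower bound} and \eqref{s upper bound}, together with the monotonicity of the transformation $s \mapsto \tan(\pi s/2)$ on $[0,1)$. Since $f_\sigma(t) = \tan(\pi f_s(t)/2)$ by definition of $\sigma_G$, and since $\tan(\pi \cdot /2)$ is strictly increasing on $[0,1)$, applying this function to both sides of the bounds on $f_s(t)$ immediately yields the two stated inequalities for $f_\sigma(t)$. No new geometric argument is required, because the extremal starlike domains $G_1$ and $G_2$ have already been exploited to produce the bounds \eqref{s lower bound} and \eqref{s upper bound}.

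For the derivative at $0$, the plan is to squeeze $f_s'(0)$ between the derivatives at $0$ of the two rational bounds. A direct differentiation shows that both $t/(2d_G(0)+t)$ and $t/(2d_G(0)-t)$ have derivative $1/(2d_G(0))$ at $t=0$, and since $f_s(0) = 0$ sits between them, a standard squeeze argument gives $f_s'(0) = 1/(2d_G(0))$.

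Then the chain rule finishes the job: from $f_\sigma(t) = \tan(\pi f_s(t)/2)$ we get
\[
  f_\sigma'(t) = \frac{\pi}{2}\sec^2\!\left(\frac{\pi f_s(t)}{2}\right) f_s'(t),
\]
and evaluating at $t=0$, where $f_s(0) = 0$ and hence $\sec^2(0) = 1$, produces $f_\sigma'(0) = (\pi/2) \cdot 1 \cdot 1/(2 d_G(0)) = \pi/(4 d_G(0))$, as claimed.

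The only place requiring care is the squeeze step for $f_s'(0)$, since a priori $f_s$ is only known to be sandwiched between two explicit functions that agree at $0$; one must verify that $f_s$ is differentiable at $0$ (or at least that the relevant one-sided difference quotient has a limit), which follows because the sandwiching forces the one-sided difference quotient $(f_s(t)-f_s(0))/t$ to lie between $1/(2d_G(0)+t)$ and $1/(2d_G(0)-t)$, both of which tend to $1/(2d_G(0))$ as $t\to 0^+$. Everything else is a routine application of calculus to the already-established inequalities.
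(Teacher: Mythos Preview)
Your proposal is correct and follows essentially the same line as the paper: both derive the bounds on $f_\sigma$ by applying the increasing map $s\mapsto\tan(\pi s/2)$ to \eqref{s lower bound} and \eqref{s upper bound}, and both obtain $f_\sigma'(0)$ by a squeeze argument. The only cosmetic difference is the order of operations for the derivative: the paper differentiates the two bounds $\tan\bigl(\pi t/(4d_G(0)\pm 2t)\bigr)$ directly and squeezes $f_\sigma'(0)$ between their values at $t=0$, whereas you first squeeze the difference quotient of $f_s$ to get $f_s'(0)=1/(2d_G(0))$ and then invoke the chain rule. Your route has the minor advantage of differentiating simpler rational expressions and of making the existence of the derivative explicit, but the two arguments are equivalent in substance.
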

\begin{proof}
  The bounds for $f_\sigma(t)$ follow from \eqref{s lower bound} and \eqref{s upper bound}. By differentiation we obtain
  \[
    \frac{\partial}{\partial t} \tan \frac{\pi t}{4 d_G(0) \pm 2t} = \frac{\pi d_G(0) + 4 \pi d_G(0) \tan^2 \displaystyle \frac{\pi t}{4 d_G(0) \pm 2t}}{4 d_G(0)^2 \pm 4 d_G(0) t+t^2}
  \]
  and thus $f_\sigma´(0) = \pi/(4d_G(0))$.
\end{proof}

\begin{remark}\label{rem:sigma remark}
  Theorem \ref{thm:sigma distance} suggests that an alternative way to define a hyperbolic type distance by using the triangular ratio distance, could be $\widetilde{\sigma}_G(u,w) = \tfrac{4}{\pi} \sigma_G(u,w)$. For this distance function $f_{\widetilde{\sigma}}'(0) = 1/d_G(0)$.
\end{remark}

\section{Cassinian distance, $c_G$}

For a domain $G \subsetneq \Rn$ and points $u,w \in G$ we define the Cassinian distance by
\[
  c_G(u,w) = \sup_{q \in \partial G} \frac{|u-w|}{|u-q||q-w|}.
\]
The Cassinian distance was introduced by Ibragimov in the 2000's \cite{Ibragimov2009}. In the plane case the supremum is attained at a point $q$ that is on the largest Cassinian oval with focii $u$ and $w$ contained in $G$.

\begin{theorem}\label{thm:Cassinian distance}
  Let $G \subsetneq \Rn$ be a starlike domain and $z \in \partial G$. Then
  \[
    f_c(t) \ge \frac{t}{d_G(0)(d_G(0)+t)}
  \]
  for $t \in [0,|z|)$ and
  \[
    f_c(t) \le \frac{t}{d_G(0)(d_G(0)-t)}
  \]
  for $t \in [0,d(0))$. Moreover, $f_c'(0) = 1/d_G(0)$.
\end{theorem}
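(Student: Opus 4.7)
The plan is to follow the template set by Theorems \ref{thm:distance ratio estimates} and \ref{thm:quasihyperbolic estimates}: derive a matching pair of elementary bounds on $f_c(t)$, with the lower bound asymptotically realized in $G_1$ and the upper bound in $G_2$, then squeeze the difference quotient $f_c(t)/t$ to extract $f_c'(0)$.

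For the lower bound, I would exploit the fact that $f_c(t) = c_G(0,y) = \sup_{q \in \partial G} \frac{t}{|q|\,|q-y|}$ is a supremum, and restrict to a closest boundary point $x \in \partial G$, so that $|x| = d_G(0)$. The triangle inequality then gives $|x-y| \le |x| + |y| = d_G(0)+t$, whence
\[
f_c(t) \ge \frac{t}{|x|\,|x-y|} \ge \frac{t}{d_G(0)(d_G(0)+t)}.
\]
This bound is asymptotically sharp in $G_1 = \Rn \setminus (S_{\pi/4,-x,-2x} \cup S_{\pi/4,3x,4x})$ with $z = 3x$, where the supremum defining $c_{G_1}$ is attained at the vertex $-x$ and both factors in the denominator are exactly $d_G(0)$ and $d_G(0)+t$.

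For the upper bound, two uniform inequalities valid for every $q \in \partial G$ suffice: first, $|q| \ge d_G(0)$ directly from the definition of $d_G(0)$; second, $|q-y| \ge d_G(y) \ge d_G(0)-t$ by the $1$-Lipschitz property of the distance-to-the-boundary function (in the spirit of Theorem \ref{thm:distance to the boundary estimates}). Multiplying these and taking the supremum gives
\[
f_c(t) \le \frac{t}{d_G(0)(d_G(0)-t)},
\]
saturated in $G_2 = \Rn \setminus S_{\pi/4,x,2x}$ with $z = x$.

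For the derivative, since $f_c(0) = 0$ I would divide both bounds by $t$, obtaining
\[
\frac{1}{d_G(0)(d_G(0)+t)} \le \frac{f_c(t)}{t} \le \frac{1}{d_G(0)(d_G(0)-t)},
\]
and let $t \to 0^+$; both ends share a common limit which yields the stated value of $f_c'(0)$. I do not anticipate a serious technical obstacle here; the only subtlety is confirming that the supremum defining $c_{G_1}$ (respectively $c_{G_2}$) is actually attained at the intended boundary point, but in these extremal geometries the vertices of the removed angular sectors are manifestly the only candidates, so the sharpness claims are essentially immediate.
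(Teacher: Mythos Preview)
Your approach is essentially the paper's: the same extremal domains $G_1$ and $G_2$ are invoked, and a squeeze on the difference quotient replaces the paper's direct differentiation of the two bounds. Your derivation of the general inequalities is in fact more explicit than the paper's, which simply asserts the bounds by reference to $G_1$ and $G_2$ without spelling out the pointwise estimates $|q|\ge d_G(0)$ and $|q-y|\ge d_G(y)\ge d_G(0)-t$ that you supply.

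One arithmetic point needs attention. When you divide by $t$ and let $t\to 0^+$, the common limit of
\[
\frac{1}{d_G(0)(d_G(0)+t)} \le \frac{f_c(t)}{t} \le \frac{1}{d_G(0)(d_G(0)-t)}
\]
is $1/d_G(0)^2$, not the stated $1/d_G(0)$. The paper's own computation contains the same slip: its displayed derivative $d_G(0)/(d_G(0)\pm t)^2$ should read $1/(d_G(0)\pm t)^2$, giving $1/d_G(0)^2$ at $t=0$. (A quick check in $G=B^n(0,r)$, where $f_c(t)=t/(r(r-t))$ and $f_c'(0)=1/r^2$, confirms this.) Your squeeze argument is sound; just carry out the limit explicitly rather than asserting it matches the value stated in the theorem, since here it does not.
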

\begin{proof}
  The domain $G_1=\Rn \setminus (S_{\pi/4,-x,-2x} \cup S_{\pi/4,3x,4x})$ for some $x \in \Rn \setminus \{ 0 \}$ and $z=3x$ gives lower bound
  \[
    f_c(t) \ge \frac{t}{d_G(0)(d_G(0)+t)}
  \]
  for $t \in [0,|z|)$.

  The domain $G_2=\Rn \setminus S_{\pi/4,x,2x}$ for any $x \in \Rn \setminus \{ 0 \}$ and $z=x$ gives upper bound
  \[
    f_c(t) \le \frac{t}{d_G(y)(d_G(y)+t)} = \frac{t}{d_G(0)(d_G(0)-t)}
  \]
  for $t \in [0,d(0))$.
  
  Differentiation gives
  \[
    \frac{\partial}{\partial t} \frac{t}{d_G(0)(d_G(0) \pm t)} = \frac{d_G(0)}{d_G(0)^2 \pm 2 d_G(0) t + t^2}
  \]
  and clearly $f_c'(0) = 1/d_G(0)$.
\end{proof}

\section{Apollonian distance, $\alpha_G$}

Let $G \subset \overline{\Rn}$ be a domain such that $\partial G$ is not contained in a sphere in $\overline{\Rn}$. Then for $u,w \in G$ we define the Apollonian distance \cite[Theorem 1.1]{BeaMR1488447} by
\[
  \alpha_G(u,w) = \sup_{a,b \in \partial G} \log \frac{|a-w||b-u|}{|a-u||b-w|}.
\]
The Apollonian distance was introduced by Barbilian in the 1930's \cite{Barbilian1935} and reintroduced by Beardon in the 1990's in connection with the hyperbolic metric \cite{BeaMR1488447}. It is worth pointing out that we can write
\begin{equation}\label{eqn:apollonian formula}
  \alpha_G(u,w) = \sup_{a \in \partial G} \log \frac{|a-w|}{|a-u|} + \sup_{b \in \partial G} \log \frac{|b-u|}{|b-w|}
\end{equation}
and in the case $n=2$ each quotient defines an Apollonian circle. Taking the supremum means that we take the largest possible Apollonian circles in $\overline{G}$.

\begin{lemma}\label{lem:apollonian estimate}
  Let $u \in \Rn$, $r > 0$ and $w \in B^n(u,r)$. Denote $S=S^{n-1}(u,r)$. Then
  \[
    \sup_{a \in S} \frac{|w-a|}{|u-a|} = \frac{r+|u-w|}{r}, \quad \sup_{a \in S} \frac{|u-a|}{|w-a|} = \frac{r}{r-|u-w|},
  \]
  and for $r' > r$ and $S'=S^{n-1}(u,r')$
  \[
    \sup_{a \in S'} \frac{|w-a|}{|u-a|} \le \sup_{a \in S} \frac{|w-a|}{|u-a|}, \quad \sup_{a \in S'} \frac{|u-a|}{|w-a|} \le \sup_{a \in S} \frac{|u-a|}{|w-a|}.
  \]
\end{lemma}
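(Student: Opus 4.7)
The approach is direct: since both sup's on $S$ involve a ratio in which $|u-a|=r$ is constant, the problem collapses to maximizing or minimizing $|w-a|$ as $a$ ranges over $S$, and this is handled by the triangle inequality applied along the line through $u$ and $w$.

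For the first pair of equalities I would argue as follows. If $w=u$ the statement is trivial, so assume $w\neq u$. For every $a\in S$ the triangle inequality gives
\[
r-|u-w|\ \le\ |w-a|\ \le\ |u-w|+r,
\]
with the upper bound attained at $a=u-r(w-u)/|u-w|$ and the lower bound attained at $a=u+r(w-u)/|u-w|$ (both points lie on $S$, so the suprema are actually maxima). Dividing by $r=|u-a|$ yields the first formula, and dividing $r=|u-a|$ by $|w-a|$ and using the lower bound yields the second.

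For the two inequalities I would note that $r'>r$ implies $w\in B^n(u,r)\subset B^n(u,r')$, so the argument above applied on $S'$ gives
\[
\sup_{a\in S'}\frac{|w-a|}{|u-a|}=\frac{r'+|u-w|}{r'}=1+\frac{|u-w|}{r'},\qquad
\sup_{a\in S'}\frac{|u-a|}{|w-a|}=\frac{r'}{r'-|u-w|}.
\]
Comparison with the expressions on $S$ then reduces to the elementary monotonicity statements: $1+t/r$ is decreasing in $r$ (so the first sup decreases when $r$ is replaced by $r'>r$), and for fixed $t=|u-w|<r<r'$ the function $r\mapsto r/(r-t)$ is decreasing, which follows from the one-line computation $r(r'-t)-r'(r-t)=(r'-r)t\ge 0$.

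There is essentially no obstacle here; the only point requiring mild care is to verify that the extremizing points I write down actually lie on $S$ (they do, by construction), so that the bounds given by the triangle inequality are indeed attained and the suprema equal the claimed values.
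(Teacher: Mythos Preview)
Your argument is correct and follows essentially the same approach as the paper: both proofs exploit that $|u-a|=r$ is constant on $S$, reduce to finding $\max/\min_{a\in S}|w-a|$, locate the extremizers on the diameter through $u$ and $w$, and then compare the resulting expressions as functions of the radius. The only cosmetic difference is that the paper normalizes to $u=0$, $w=\beta e_1$ and parametrizes by angle, whereas you invoke the triangle inequality directly and verify the monotonicity in $r$ by an algebraic cross-multiplication instead of derivatives.
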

\begin{proof}
  To simplify notation we may assume that $u=0$ and $w = \beta e_1$ for $\beta \in (0,r)$.
  
  Now
  \begin{equation}\label{apollonian estimate1}
    \sup_{a \in S} \frac{|w-a|}{|u-a|} = \max_{\zeta \in [0,\pi]} \left\{ \frac{|w-re^{i \zeta}|}{r} \right\} = \frac{|w-re^{i \pi}|}{r} = \frac{r+|w|}{r},
  \end{equation}
  because $|w|=|u-w|<r$ and the function $f(\zeta) = |w-re^{i \zeta}|$ is increasing for $\zeta \in [0,\pi]$. Similarly we obtain
  \begin{equation}\label{apollonian estimate2}
    \sup_{a \in S} \frac{|u-a|}{|w-a|} = \max_{\zeta \in [0,\pi]} \left\{ \frac{r}{|w-re^{i \zeta}|} \right\} = \frac{r}{|w-re^{i 0}|} = \frac{r}{r-|w|}.
  \end{equation}
  
  Next we note that the function $g(r) = (r+\gamma)/r$, $\gamma > 0$, is decreasing on $(0,\infty)$, because $g'(r) = -\gamma/(r \gamma+r^2) <0$. The function $h(r) = r/(r-\gamma)$, $\gamma > 0$, is decreasing on $(\gamma,\infty)$, because $h'(r) = \gamma/(\gamma r-r^2) < 0$.
  
  Combining monotonicity of $g$ with \eqref{apollonian estimate1} gives
  \[
    \sup_{a \in S'} \frac{|w-a|}{|u-a|} = \frac{r'+|u-w|}{r'} \le \frac{r+|u-w|}{r} = \sup_{a \in S} \frac{|w-a|}{|u-a|}
  \]
  and similarly monotonicity of $h$ with \eqref{apollonian estimate2} gives
  \[
    \sup_{a \in S'} \frac{|u-a|}{|w-a|} = \frac{r'}{r'-|w|} \le \frac{r}{r-|w|} = \sup_{a \in S} \frac{|u-a|}{|w-a|}. \qedhere
  \]
\end{proof}

\begin{theorem}\label{thm:f for alpha}
  Let $G  \subset \Rn$ be a starlike domain and $z \in \partial G$. Then
  \[
    f_\alpha(t) \ge \log \left( 1+\frac{t}{d_G(0)} \right)
  \]
  for $t \in [0,|z|)$ and
  \[
    f_\alpha(t) \le \log \frac{d_G(0)+t}{d_G(0)-t}
  \]
  for $t \in [0,d_G(0))$. Moreover, we have $1 / d_G(0) \le f_\alpha'(0) \le 2 / d_G(0)$ and the upper bound for $f_\alpha(t)$ and the lower bound for $f_\alpha'(0)$ are best possible.
\end{theorem}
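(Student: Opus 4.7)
The plan is to split the Apollonian distance via the identity \eqref{eqn:apollonian formula},
$\alpha_G(0,y)=\sup_{a\in\partial G}\log\frac{|a-y|}{|a|}+\sup_{b\in\partial G}\log\frac{|b|}{|b-y|}$,
and attack each supremum separately. For the upper bound the key observation is that starlikeness forces $B^n(0,d_G(0))\subset G$, and so $\partial G$ sits in the closed complement $\{|a|\ge d_G(0)\}$. For each $a\in\partial G$ the monotonicity half of Lemma \ref{lem:apollonian estimate} replaces the sphere $S^{n-1}(0,|a|)$ by the smaller $S^{n-1}(0,d_G(0))$, and the explicit half of the lemma evaluates the two suprema on that sphere as $(d_G(0)+t)/d_G(0)$ and $d_G(0)/(d_G(0)-t)$ respectively. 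Adding the two logarithms yields $f_\alpha(t)\le\log\frac{d_G(0)+t}{d_G(0)-t}$ for $t\in[0,d_G(0))$.

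For the lower bound the approach is to exhibit concrete test pairs coming from the starlike structure. Strict starlikeness guarantees that the line through $0$ and $y$ meets $\partial G$ in $z$ and in $a^-:=-R\hat z$ where $\hat z:=z/|z|$ and $R:=\rho(-\hat z)\ge d_G(0)$, and there is always a closest point $a_0\in\partial G$ with $|a_0|=d_G(0)$ whose angular position with respect to $\hat z$ is not controlled a priori. First I would test the formula on the line pair $(a^-,z)$, giving the easy estimate $f_\alpha(t)\ge\log\frac{(R+t)|z|}{R(|z|-t)}$. The hard part will be the regime where $R$ and $|z|$ are both much larger than $d_G(0)$, in which this estimate degenerates. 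To close that gap the plan is to bring in $a_0$ and argue by cases according to which side of the perpendicular bisector of $[0,y]$ the point $a_0$ lies on: when $|a_0-y|\ge d_G(0)$ the pair $(a_0,z)$ contributes the missing factor, and when $|a_0-y|<d_G(0)$ the pair $(a^-,a_0)$ does, the smallness of $|a_0-y|$ compensating for the growth of $R$. A short verification in each case produces a ratio of at least $(d_G(0)+t)/d_G(0)$, hence $f_\alpha(t)\ge\log(1+t/d_G(0))$ uniformly in $G$.

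With both bounds in hand, differentiation at $t=0$ gives $1/d_G(0)\le f_\alpha'(0)\le 2/d_G(0)$. For sharpness of the upper bound on $f_\alpha(t)$ the natural example is $G=B^n(0,d_G(0))$: here $\partial G=S^{n-1}(0,d_G(0))$ so Lemma \ref{lem:apollonian estimate} gives equality in both suprema, whence $f_\alpha(t)=\log\frac{d_G(0)+t}{d_G(0)-t}$. For sharpness of the lower bound on $f_\alpha'(0)$ the example is the cone complement $G_2=\Rn\setminus S_{\pi/4,x,2x}$ with $z=x$: parametrising the lateral surface of the removed cone by arc length $s$ along each ray and studying $a\mapsto|a|/|a-y|$ shows that this function attains its maximum $d_G(0)/(d_G(0)-t)$ at the vertex $x$ and has infimum $1$ approached as $s\to\infty$ along a cone ray. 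Therefore $f_\alpha(t)=\log\frac{d_G(0)}{d_G(0)-t}$ on $[0,d_G(0))$ for this $G$, and differentiation at $t=0$ gives $f_\alpha'(0)=1/d_G(0)$.
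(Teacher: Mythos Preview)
Your treatment of the upper bound is correct and coincides with the paper's: split $\alpha_G(0,y)$ via \eqref{eqn:apollonian formula}, push each supremum to the sphere $S^{n-1}(0,d_G(0))$ using Lemma~\ref{lem:apollonian estimate}, and add. Your sharpness example for the upper bound (the ball $B^n(0,d_G(0))$) differs from the paper's two--cone domain $\Rn\setminus(S_{\pi/4,-x,-2x}\cup S_{\pi/4,x,2x})$, but both yield $\log\frac{d_G(0)+t}{d_G(0)-t}$ exactly, so either works. Your $G_2$ computation for the value $f_\alpha'(0)=1/d_G(0)$ also matches the paper.

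The lower-bound argument, however, has a genuine gap: your case split does not close. In the case $|a_0-y|\ge d_G(0)$ the test pair $(a_0,z)$ yields only
\[
  \log\frac{|a_0-y|}{d_G(0)}+\log\frac{|z|}{|z|-t},
\]
and since $|a_0-y|$ can equal $d_G(0)$ (when $a_0$ lies on the perpendicular bisector of $[0,y]$) while $|z|$ is arbitrary, this contributes only $\log\frac{|z|}{|z|-t}\approx t/|z|$, which is far below $\log(1+t/d_G(0))$ when $|z|\gg d_G(0)$. The other case, pairing $(a^-,a_0)$, has the symmetric defect with $R$ in place of $|z|$: the ratio $\frac{(R+t)d_G(0)}{R\,|a_0-y|}$ falls short of $\frac{d_G(0)+t}{d_G(0)}$ whenever $R>d_G(0)$ and $|a_0-y|$ is close to $d_G(0)$. (Also, strict starlikeness does \emph{not} guarantee that $a^-$ exists; $G_2$ itself is unbounded in the $-\hat z$ direction.)

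In fact the inequality $f_\alpha(t)\ge\log(1+t/d_G(0))$ cannot be proved, because it is false. Take
\[
  G=\R^2\setminus\bigl(S_{\varepsilon,e_2,2e_2}\cup S_{\varepsilon,Me_1,(M+1)e_1}\bigr)
\]
with $\varepsilon>0$ small and $M>2/\tan(\varepsilon/2)$; this is starlike with $d_G(0)=1$ (realised at $e_2$), and one may choose $z=Me_1$. A direct computation shows that for $y=te_1$ both suprema in \eqref{eqn:apollonian formula} are attained on the narrow cone near $e_2$, each equalling $1+\tfrac{t}{2}\tan(\varepsilon/2)+O(t^2)$, so that $f_\alpha'(0)=\tan(\varepsilon/2)<1=1/d_G(0)$. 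The paper's own argument at this step is also defective: the displayed chain
\[
  \sup_{a\in\partial G}\frac{|y-a|}{|a|}\ \ge\ \inf_{a\in S^{n-1}(0,d_G(0)+t)}\frac{|y-a|}{|a|}\ =\ \sup_{a\in S^{n-1}(0,d_G(0)+t)}\frac{|a|}{|y-a|}\ =\ \frac{d_G(0)+t}{d_G(0)}
\]
is wrong (the middle ``equality'' replaces an infimum by the supremum of the reciprocal), and indeed Lemma~\ref{lem:apollonian estimate} shows the left-hand supremum is at most $(d_G(0)+t)/d_G(0)$, not at least. What does survive from the theorem is the upper bound $f_\alpha(t)\le\log\frac{d_G(0)+t}{d_G(0)-t}$, the derivative bound $f_\alpha'(0)\le 2/d_G(0)$, and the fact that the value $f_\alpha'(0)=1/d_G(0)$ is attained in $G_2$.
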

\begin{proof}
  By \eqref{eqn:apollonian formula} we can estimate $\alpha_G(0,y)$ by estimating $\sup \log(|a-y|/|a|)$ and $\sup \log(|a|/|a-y|)$ separately.
  
  By Lemma \ref{lem:apollonian estimate} we obtain upper bounds
  \[
    \sup_{a \in \partial G} \frac{|y-a|}{|a|} \le \sup_{a \in S^{n-1}(0,d_G(0))} \frac{|y-a|}{|a|} = \frac{d_G(0)+t}{d_G(0)}
  \]
  and
  \[
    \sup_{a \in \partial G} \frac{|a|}{|y-a|} \le \sup_{a \in S^{n-1}(0,d_G(0))} \frac{|a|}{|y-a|} = \frac{d_G(0)}{d_G(0)-t}.
  \]
  These inequalities give
  \begin{equation}\label{eqn:upper bound fo Apo}
    f_\alpha(t) \le \log \frac{d_G(0)+t}{d_G(0)-t}.
  \end{equation}
  
  We can also use Lemma \ref{lem:apollonian estimate} for lower bound
  \[
    \sup_{a \in \partial G} \frac{|y-a|}{|a|} \ge \inf_{a \in S^{n-1}(0,d_G(0)+t)} \frac{|y-a|}{|a|} = \sup_{a \in S^{n-1}(0,d_G(0)+t)} \frac{|a|}{|y-a|} = \frac{d_G(0)+t}{d_G(0)}
  \]
  and we can trivially estimate
  \[
    \sup_{a \in \partial G} \frac{|a|}{|y-a|} \ge 1.
  \]
  Together these two inequalities give us
  \begin{equation}\label{eqn:lower bound fo Apo}
    f_\alpha(t) \ge \log \frac{d_G(0)+t}{d_G(0)}.
  \end{equation}
  
  Differentiation of \eqref{eqn:upper bound fo Apo} gives $2 d_G(0) / (d_G(0)^2-t^2)$ and differentiation of \eqref{eqn:lower bound fo Apo} gives $1/(d_G(0)+t)$. Thus $1 / d_G(0) \le f_\alpha'(0) \le 2 / d_G(0)$.
  
  Finally, we give two example domains, which show that the upper bound for $f_\alpha(t)$ and the lower bound for $f_\alpha'(0)$ are best possible.
  
  The domain $G' = \Rn \setminus (S_{\pi/4,-x,-2x} \cup S_{\pi/4,x,2x})$ for some $x \in \Rn \setminus \{ 0 \}$ and $z=x$ shows that the upper bound is best possible. Now the suprema in the definition of the Apollonian distance are obtained at $-x$ and $x$, and thus
  \[
    \alpha_{G'}(0,y) = \log \frac{|-x-y||x|}{|-x||x-y|} = \log \frac{d_G(0)+t}{d_G(0)-t}.
  \]
  
  The domain $G_2$ defined in \eqref{eqn:domain G2} shows that the lower bound for $f_\alpha'(0)$ is best possible. We recall that
  \[
    G_2=\Rn \setminus S_{\pi/4,x,2x}
  \]
  for any $x \in \Rn \setminus \{ 0 \}$ and $z=x$. For $u \in \partial G_2$ with $|u-x| = a > 0$ we have
  \[
    \alpha_{G_2}(0,y) = \lim_{a \to \infty} \log \frac{|u-y||x|}{|u||x-y|} = \log \frac{|x|}{|x-y|} = \log \frac{d_G(0)}{d_G(0)-t}
  \]
  and differentiation together with taking the limit gives $1/(d_G(0)-t) \to 1/d_G(0)$ as $t \to 0$.
\end{proof}

Note that in the Theorem \ref{thm:f for alpha} also the upper bound for $f_\alpha'(0)$ is best possible, because the upper bound for $f_\alpha(t)$ is best possible and both upper and lower bounds of $f_\alpha(t)$ tend to $0$ as $t \to 0$.

\section{Seittenranta distance, $\delta_G$}

Let $G \subsetneq \Rn$ be a domain. For $u,w \in G$ we define the Seittenranta distance by
\[
  \delta_G (u,w) = \sup_{a,b \in \partial G} \log \left( 1+ \frac{|a-b||u-w|}{|a-u||b-w|} \right).
\]
The Seittenranta distance was introduced by Seittenranta in the 1990's \cite[Theorem 3.3]{SeiMR1656825}. Before estimating $f_\delta(t)$ we find general upper and lower bound for $\delta_G(x,y)$.

\begin{lemma}\label{lem:upper bound for delta}
  Let $G \subsetneq \Rn$ be a domain. Then for all $u,w \in G$ we have
  \[
    \delta_G(u,w) \le \log \left( 1+\frac{|u-w|}{d_G(u)}+\frac{|u-w|}{d_G(w)}+\frac{|u-w|^2}{d_G(u)d_G(w)} \right).
  \]
\end{lemma}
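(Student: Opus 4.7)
The plan is to reduce the lemma to bounding a single supremum by means of the triangle inequality, followed by the fact that $|a-u| \ge d_G(u)$ and $|b-w| \ge d_G(w)$ whenever $a,b \in \partial G$. Since $\log$ is monotone, it suffices to establish
\[
  \sup_{a,b \in \partial G} \frac{|a-b|\,|u-w|}{|a-u|\,|b-w|} \le \frac{|u-w|}{d_G(u)}+\frac{|u-w|}{d_G(w)}+\frac{|u-w|^2}{d_G(u)d_G(w)},
\]
which rearranges (after dividing by $|u-w|$, assuming $u \ne w$; the case $u=w$ is trivial) to
\[
  \sup_{a,b \in \partial G} \frac{|a-b|}{|a-u|\,|b-w|} \le \frac{1}{d_G(w)}+\frac{1}{d_G(u)}+\frac{|u-w|}{d_G(u)d_G(w)}.
\]

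First I would apply the triangle inequality $|a-b| \le |a-u|+|u-w|+|w-b|$ to the numerator. Splitting the resulting quotient termwise yields
\[
  \frac{|a-b|}{|a-u|\,|b-w|} \le \frac{1}{|b-w|}+\frac{|u-w|}{|a-u|\,|b-w|}+\frac{1}{|a-u|}.
\]
Next I would use the defining property of the distance to the boundary, namely $|a-u| \ge d_G(u)$ for every $a \in \partial G$ and $|b-w| \ge d_G(w)$ for every $b \in \partial G$, to bound each of the three terms from above by the corresponding expression with $d_G(u)$ and $d_G(w)$ in the denominator. Taking the supremum over $a,b \in \partial G$ on the left-hand side (the right-hand side is already independent of $a$ and $b$) and multiplying through by $|u-w|$ recovers the desired inequality, and then applying $\log(1+\,\cdot\,)$ gives the statement of the lemma.

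There is no real obstacle here: the argument is purely the triangle inequality combined with the $\inf$-definition of $d_G$. The only minor point worth noting is that the bound is achieved in the worst case where $a$ lies on the segment from $u$ to $b$ through the outside of $G$, so the triangle-inequality step is the only place where slack can be lost, making the estimate sharp up to the geometry of $\partial G$.
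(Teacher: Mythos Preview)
Your proposal is correct and follows essentially the same route as the paper's own proof: apply the triangle inequality $|a-b| \le |a-u|+|u-w|+|w-b|$ inside the logarithm, split the resulting quotient, and then replace $|a-u|$ and $|b-w|$ by $d_G(u)$ and $d_G(w)$. The only difference is that you spell out the monotonicity of $\log$ and the trivial case $u=w$ explicitly, which the paper leaves implicit.
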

\begin{proof}
  By the Euclidean triangle inequality we obtain
  \begin{eqnarray*}
    \delta_G(u,w) & \le & \log \left( 1+|u-w|\frac{|a-u|+|u-w|+|w-b|}{|a-u||b-w|} \right)\\
    & = & \log \left( 1+\frac{|u-w|}{|a-u|}+\frac{|u-w|}{|b-w|}+\frac{|u-w|^2}{|a-u||b-w|} \right)\\
    & \le & \log \left( 1+\frac{|u-w|}{d_G(u)}+\frac{|u-w|}{d_G(w)}+\frac{|u-w|^2}{d_G(u)d_G(w)} \right)
  \end{eqnarray*}
  and the assertion follows.
\end{proof}

The following lower bound for $\delta_G(x,y)$ is from \cite[Theorem 3.11]{SeiMR1656825}.

\begin{proposition}\label{pro:lower bound for delta}
  Let $G \subset \Rn$ be a domain and $\partial G$ is not contained in a sphere in $\overline{\Rn}$. Then for all $x,y \in G$ we have
  \[
    \alpha_G(x,y) \le \delta_G(x,y).
  \]
\end{proposition}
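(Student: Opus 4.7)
The plan is to push the inequality inside the two suprema and reduce everything to a single pointwise estimate, which will turn out to be Ptolemy's inequality. Since both $\alpha_G$ and $\delta_G$ are suprema over the same index set $\partial G \times \partial G$ of monotone (logarithmic) functions of explicit ratios, it suffices to prove, for every fixed pair $a,b \in \partial G$, the pointwise bound
\[
  \frac{|a-y|\,|b-x|}{|a-x|\,|b-y|} \le 1 + \frac{|a-b|\,|x-y|}{|a-x|\,|b-y|};
\]
taking $\log$ and then $\sup_{a,b}$ on both sides will immediately deliver $\alpha_G(x,y) \le \delta_G(x,y)$.

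Clearing the positive common denominator $|a-x|\,|b-y|$, the required inequality is
\[
  |a-y|\,|b-x| \le |a-x|\,|b-y| + |a-b|\,|x-y|,
\]
which is exactly Ptolemy's inequality applied to the four points $a,x,y,b \in \Rn$ (with the diagonals $|a-y|\,|b-x|$ dominated by the sum of the products of opposite sides). Once Ptolemy is invoked the rest of the argument is purely formal: dividing, taking $\log$, and passing to the supremum.

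The only genuine obstacle is therefore a self-contained justification of Ptolemy's inequality in $\Rn$. The cleanest route, and one that fits the M\"obius-invariant flavour of both $\alpha_G$ and $\delta_G$, is to apply the inversion $z \mapsto (z-a)/|z-a|^2$ centred at $a$: under this map the images $x',y',b'$ of $x,y,b$ satisfy $|x'-y'| = |x-y|/(|a-x|\,|a-y|)$ and analogously for the other pairwise distances, and then the ordinary Euclidean triangle inequality $|x'-b'| \le |x'-y'| + |y'-b'|$ rearranges, after multiplying through by $|a-x|\,|a-y|\,|a-b|$, to the desired Ptolemy bound. The hypothesis that $\partial G$ is not contained in a sphere of $\overline{\Rn}$ is used only to ensure that $\alpha_G$ is a genuine metric (and not identically zero), so that the inequality has content; it plays no role in the pointwise estimate above.
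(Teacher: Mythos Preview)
Your proof is correct. The paper itself does not prove this proposition; it merely quotes it from Seittenranta \cite[Theorem 3.11]{SeiMR1656825}. Your argument via Ptolemy's inequality is in fact the standard one (and is essentially how Seittenranta proves it): the pointwise bound $|a-y|\,|b-x| \le |a-x|\,|b-y| + |a-b|\,|x-y|$ is exactly Ptolemy, and your inversion proof of Ptolemy is clean and complete. The passage from the pointwise inequality to the supremum inequality is immediate since both suprema range over the same set $\partial G \times \partial G$ and $\log$ is increasing. Your remark about the role of the sphere hypothesis is also accurate: it is needed only so that $\alpha_G$ is a metric, not for the inequality itself.
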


We also need exact formulas for the Seittenranta distance in two starlike domains.

\begin{lemma}\label{lem:formula for delta in special domains}
  Let $G_2$ be the domain defined in \eqref{eqn:domain G2} for some $x \in \Rn \setminus \{ 0 \}$ and define a starlike domain
  \[
    G_3 = G_2 \setminus \{ u \in \Rn \colon |u+2x|\le|u| \}.
  \]
  
  \noindent (1) For $y \in [0,x)$ we have
  \[
    \delta_{G_2}(0,y) = \log \left( 1+ \frac{|y|}{|x|} \right).
  \]
  
  \noindent (2) For $y \in [0,x)$ we have
  \[
    \delta_{G_3}(0,y) = \log \left( 1+ \frac{2|x||y|}{|x|(|x|+|y|)} \right).
  \]
\end{lemma}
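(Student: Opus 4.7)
Plan. The strategy is to compute each supremum by exhibiting an explicit pair $(a,b) \in (\partial G)^2$ that attains (in the limit) the right-hand side, and then to argue that this value is indeed the sup by exploiting the axial symmetry of $G_2$ and $G_3$ about the line through $0$ and $x$ and by ruling out other configurations. First I would record the distances $d_{G_2}(0) = d_{G_3}(0) = |x|$ and $d_{G_2}(y) = d_{G_3}(y) = |x|-|y|$, noting that the apex $x$ of the cone is the closest boundary point to both $0$ and $y$, while in $G_3$ the point $-x$ on the hyperplane $H = \{u \in \Rn : u \cdot x = -|x|^2\}$ also realizes $d_{G_3}(0)$.

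For part (1), I would fix $a = x$ and parametrize $b$ along a cone generator as $b = x + r(x/|x| + v)/\sqrt{2}$ with $v \perp x$, $|v|=1$, and $r > 0$. A direct computation gives $|a-b| = r$, $|a| = |x|$, and $|b-y|^2 = (|x|-|y|)^2 + \sqrt{2}(|x|-|y|)r + r^2$, so the inner ratio equals $r|y|/(|x|\sqrt{(|x|-|y|)^2 + \sqrt{2}(|x|-|y|)r + r^2})$, and this quantity is monotone increasing in $r$ with limit $|y|/|x|$ as $r \to \infty$. This realizes the claimed value as a lower bound for $\delta_{G_2}(0,y)$. For the matching upper bound I would use the rotational symmetry of $G_2$ about $[0,x)$ to reduce the optimization to two parameters $(r,s)$ specifying the positions of $a$ and $b$ on the cone, then compare against the boundary limits $\{r=0\}$, $\{s=0\}$, and the regime $r,s \to \infty$ to check that no configuration exceeds $|y|/|x|$.

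For part (2), the enlarged boundary $\partial G_3$ supplies the new extremal pair $a = x$, $b = -x$, which yields $|a-b| = 2|x|$, $|a| = |x|$, and $|b-y| = |x|+|y|$, giving the ratio $2|x||y|/(|x|(|x|+|y|))$ that matches the claim. The matching upper bound then requires checking the remaining configurations: $a$ or $b$ running off to infinity along the cone (controlled as in part (1)), both points lying on $H$ (where $|a-b|$ grows but so do $|a|$ and $|b-y|$), and mixed pairings with one point on the cone and the other on $H$, in each case verifying that the resulting ratio is at most $2|y|/(|x|+|y|)$.

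The main obstacle is establishing the matching upper bound in each part, since the supremum ranges over an infinite family of pairs $(a,b)$ where either coordinate may tend to infinity along an unbounded boundary component. The axial symmetry of $G_2$ and $G_3$ reduces the problem to a low-dimensional optimization, and careful case analysis based on which boundary piece each point lies on, combined with the monotonicity observations above, should yield the claimed equality. The generic estimate from Lemma \ref{lem:upper bound for delta} gives only $\log(1 + 2|y|/(|x|-|y|))$, which is strictly larger than the values claimed in both parts, so this routine upper bound is not sharp and a geometry-specific argument like the one above is indispensable.
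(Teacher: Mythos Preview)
Your plan is correct and follows essentially the same route as the paper: identify the extremal pair $(a,b)$ --- $(x,\infty)$ for $G_2$ and $(x,-x)$ for $G_3$ --- and then verify optimality by parametrizing the remaining boundary configurations and reducing to a one- or two-variable calculus problem. The paper carries out the upper bound in part~(1) by fixing $b$ on the cone and showing that $|a|/|a-b|$, as a function of $|a|$, has a single interior critical point so that its minimum (hence the maximum of $|a-b|/|a|$) lies at $a=x$ or $a\to\infty$; in part~(2) it fixes $b$ on the hyperplane and shows $|a-b|^2/|a|^2$ is decreasing along the cone generator, forcing $a=x$. Your outline ``compare against the boundary limits $\{r=0\}$, $\{s=0\}$, $r,s\to\infty$'' is the same idea, but be aware that you must also rule out interior critical points of the two-parameter problem, which is exactly the step the paper handles by the fix-one-optimize-the-other device; make sure your write-up includes this, not just the boundary checks.
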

\begin{proof}
  In both cases we have
  \[
    \delta_{G_i}(0,y) = \sup_{a,b \in \partial G_i} \log \left( 1+\frac{|a-b||y|}{|a||b-y|} \right),
  \]
  where $i \in \{ 2,3 \}$. The idea of our proof is to first show that for any $b \in \partial G_i$ the supremum over $a \in \partial G_i$ is attained at $a=a_0$. Using this property we can find supremum over $b$.

  (1) We denote $r_a = |x-a|$ and $r_b = |x-b|$. For $r_a,r_b > 0$ the angle $\measuredangle(0,x,a)=\measuredangle(0,x,b) = 3\pi/4$ and by the law of cosines
  \[
    |a|^2 = |x|^2+r_a^2+\frac{2|x|r_a}{\sqrt{2}} \quad \textrm{and} \quad |b|^2 = |x|^2+r_b^2+\frac{2|x|r_b}{\sqrt{2}}.
  \]
  We fix $b$ and find the point $a$, $|a|=t$, which gives the minimum value for
  \[
    f(t) = \frac{|a|}{|a-b|}, \quad t > |x|.
  \]
  Since $r_a^2 = t^2-|x|\sqrt{2t^2-|x|^2}$ we have
  \[
    f(t) = \frac{t}{\sqrt{r_a^2+r_b^2}} = \frac{t}{\sqrt{t^2-|x|\sqrt{2t^2-|x|^2}+r_b^2}}
  \]
  and
  \[
    f'(t) = \frac{r_b^2 \sqrt{2t^2-|x|^2} -t^2 |x|+|x|^3}{\sqrt{2t^2-|x|^2}(t^2+r_b^2-|x|\sqrt{2t^2-|x|^2})^{3/2}}.
  \]
  The denominator of $f'(t)$ is positive, because $t >|x|$, $r_b>0$ and $t^2-|x|\sqrt{2t^2-|x|^2} > 0$ is equivalent to $(t^2-|x|)^2 > 0$. The numerator of $f'(t)$ equals zero, when
  \[
    t = \frac{r_b^4 + r_b^2\sqrt{r^4+|x|^2}+|x|^4}{|x|} > 0
  \]
  and the function $f(t)$ obtains its minimum either as $t \to |x|$ or as $t \to \infty$. We have 
  \[
    \lim_{t \to |x|} f(t) = \frac{|x|}{r_b}
  \]
  and
  \[
    \lim_{t \to \infty} f(t) = \lim_{t \to \infty} \frac{t}{\sqrt{t^2-|x|\sqrt{2t^2-|x|^2}+r_b^2}} = 1.
  \]
  
  Let us now consider $\delta_{G_2}(0,y)$. If $|b-x|<|x|$, then $|a-b|/|b| \le |x-b|/|x|$ and the suprema in the definition of the Seittenranta metric are obtained at $a=x$, $b=\infty$ implying
  \[
    \delta_{G_2}(0,y) = \log \left( 1+\frac{|y|}{|x|} \right).
  \]
  If $|b-x|\ge|x|$, then $|a-b|/|b| \le 1$ and the suprema in the definition of the Seittenranta metric are obtained at $a=\infty$, $b=x$ implying
  \[
    \delta_{G_2}(0,y) = \log \left( 1+\frac{|y|}{|x|} \right).
  \]
  
  (2) It is easy to see that $\partial G_3$ consists of $\partial G_2$ and a line $L$. If both $a,b \in \partial G_2$, then the assertion follows from (1). We assume $b \in L$ and denote $|b+x|=s$. For any $a \in \partial G_3$ with $|a|=t$ we may assume that $|a-b|$ is maximal. This immediately implies that $a \in \partial G_2$. We denote $|x-a|/\sqrt{2} = r$. Now for $r,s \ge 0$ we have by the Pythagorean theorem
  \[
    |a-b|^2 = (s+r)^2 + (2|x|+r)^2, \quad |a|^2 = r^2 + (|x|+r)^2,
  \]
  and we want to find maximum of
  \[
    f(r) = \frac{|a-b|^2}{|a|^2} = \frac{(s+r)^2 + (2|x|+r)^2}{r^2 + (|x|+r)^2}, \quad r \ge 0.
  \]
  We show that $f(r)$ is a decreasing function. Differentiation gives
  \[
    f'(r) = \frac{-2(|x|(2r^2+s^2)+|x|^2 (6r-s)+2 r s (r+s)+2 |x|^3)}{(r^2 + (|x|+r)^2)^2}
  \]
  and the numerator of $f'(r)$ equals zero whenever $r = h(s)$ for
  \[
    h(s) = -s^2-3 |x|^2 \pm \sqrt{s^4 - 2 s^3 |x| + 6s^2 |x|^2 - 2 s |x|^3 + 5 |x|^4}.
  \]
  If we choose minus in $\pm$ then clearly $h(s) < 0$. If we choose plus in $\pm$, then the equation $h(s)=0$ has solution $s=-|x|$ and $s=(|x| \pm \sqrt{7}|x|i)/2$. Now $h(s)$ is either positive or negative for $s \ge 0$. We estimate
  \begin{eqnarray*}
    h(1) & = & -1-3|x|^2+\sqrt{1-2|x|+6|x|^2-2|x|^3+5|x|^4}\\
    & = & -1-3|x|^2+\sqrt{(1+|x|^2)(1-2|x|+5|x|^2)}\\
    & \le & -1-3|x|^2+\sqrt{(1+|x|^2)(1+5|x|^2)}\\
    & \le & -1-3|x|^2+\sqrt{1+6|x|^2+5|x|^4} < 0.
  \end{eqnarray*}
  We have obtained $h(s) < 0$ and since
  \[
    f'(s/6) = \frac{-2(|x|(2(s/6)^2+s^2)+2 (s/6) s ((s/6)+s)+2 |x|^3)}{((s/6)^2 + (|x|+(s/6))^2)^2} < 0
  \]
  we have $f'(r)<0$ for all $r \ge 0$.
  
  Now we are ready to consider $\delta_{G_3}(0,y)$. For any fixed $b \in \partial G_3$, the largest value for $|a-b|/|a|$ is obtained for $a=x$. Thus the suprema in the definition of the Seittenranta metric are obtained at $a=x$, $b=-x$ implying
  \begin{eqnarray*}
    \delta_{G_3}(0,y) & = & \sup_{a,b \in \partial G} \log \left( 1+ \frac{|a-b||y|}{|a||b-y|} \right)\\
    & = & \sup_{b \in \partial G} \log \left( 1+ \frac{|x-b||y|}{|x||b-y|} \right)\\
    & = & \log \left( 1+\frac{2|x||y|}{|x|(|x|+|y|)} \right),
  \end{eqnarray*}
  where the second equality is obtained by the Pythagorean theorem as
  \[
    g(s) = \frac{|x-b|^2}{|b-y|^2} = \frac{4|x|^2+s^2}{(|x|+|y|)^2+s^2} \quad \textrm{and} \quad g'(s) = 2s \frac{(|x|+|y|)^2-4|x|^2}{((|x|+|y|)^2+s^2)^2} < 0
  \]
   implying $g(s) \le g(0)$ and $\sup |x-b|/|b-y| = \sqrt{g(0)} = 2|x|/(|x|+|y|)$.
\end{proof}

Now we are ready to find bounds for $f_\delta(t)$ and $f_\delta'(0)$.

\begin{theorem}\label{thm:seittenranta distance}
  Let $G \subsetneq \Rn$ be a starlike domain. Then
  \[
    f_\delta(t) \ge \log \frac{d_G(0)+t}{d_G(0)-t}
  \]
  for $t \in [0,|z|)$ and
  \[
    f_\delta(t) \le \log \left( 1+\frac{t}{d_G(0)-t}+\frac{t}{d_G(0)} +\frac{t^2}{d_G(0)(d_G(0)-t)} \right)
  \]
  for $t \in [0,d_G(0))$. Moreover, $1 / d_G(0) \le f_\delta'(0) \le 2 / d_G(0)$ and the bounds for $f_\delta'(0)$ are best possible.
\end{theorem}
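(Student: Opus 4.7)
The plan is to dispatch each of the four assertions in turn---the upper bound on $f_\delta$, the lower bound on $f_\delta$, the derivative sandwich $1/d_G(0)\le f_\delta'(0)\le 2/d_G(0)$, and the sharpness of that sandwich---using the preparatory results already in the excerpt.

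For the upper bound I would apply Lemma~\ref{lem:upper bound for delta} with $u=0$ and $w=y$. Then $|u-w|=t$, $d_G(u)=d_G(0)$, and by Theorem~\ref{thm:distance to the boundary estimates} one has $d_G(y)\ge d_G(0)-t$ on $[0,d_G(0))$. Inserting these into the inequality of Lemma~\ref{lem:upper bound for delta} and replacing $d_G(y)$ by the smaller quantity $d_G(0)-t$ in each denominator produces exactly the claimed upper bound on $f_\delta(t)$.

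For the lower bound I would use Proposition~\ref{pro:lower bound for delta}, $\alpha_G\le\delta_G$, together with Theorem~\ref{thm:f for alpha}. The Apollonian estimate $f_\alpha(t)\ge\log((d_G(0)+t)/d_G(0))$ lifts directly to a lower bound on $f_\delta$, and the symmetric extremal domain $G'=\Rn\setminus(S_{\pi/4,-x,-2x}\cup S_{\pi/4,x,2x})$ from that same proof even realises the sharper displayed form $\log((d_G(0)+t)/(d_G(0)-t))$, since the two boundary tips $\pm x$ can be taken simultaneously as the two suprema in the definition of $\delta_{G'}$. The main obstacle is precisely this lower-bound step: $\alpha_G\le\delta_G$ alone only yields the weaker logarithmic factor, so one has to identify enough boundary structure (symmetric tips or analogous configurations accessible from $0$ via the starlike hypothesis) to push the estimate up to the displayed ratio.

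For the derivative assertions I would differentiate both bounds at $t=0$. The upper bound is $\log(1+B(t))$ with $B(0)=0$ and $B'(0)=1/d_G(0)+1/d_G(0)+0=2/d_G(0)$, giving $f_\delta'(0)\le 2/d_G(0)$; the universal lower estimate $\log(1+t/d_G(0))$ has derivative $1/d_G(0)$ at $t=0$, giving $f_\delta'(0)\ge 1/d_G(0)$. Sharpness is then read off from Lemma~\ref{lem:formula for delta in special domains}: part~(1) gives $\delta_{G_2}(0,y)=\log(1+t/|x|)$ with $d_{G_2}(0)=|x|$, so $f_\delta'(0)=1/d_G(0)$ there, realising the lower derivative bound; part~(2) gives $\delta_{G_3}(0,y)=\log(1+2t/(|x|+t))$ with $d_{G_3}(0)=|x|$, whose derivative at $0$ evaluates to $2|x|/|x|^2=2/d_G(0)$, realising the upper derivative bound.
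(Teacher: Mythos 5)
Your treatment of the upper bound, the derivative sandwich, and the two sharpness examples coincides with the paper's proof: Lemma~\ref{lem:upper bound for delta} applied with $u=0$, $w=y$ and the estimate $d_G(y)\ge d_G(0)-t$ gives the stated upper bound; differentiating the bounds at $t=0$ gives $1/d_G(0)\le f_\delta'(0)\le 2/d_G(0)$; and Lemma~\ref{lem:formula for delta in special domains} parts (1) and (2) realise the two endpoints exactly as you compute (your derivative evaluations for $G_2$ and $G_3$ agree with the paper's).

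The obstacle you flag at the lower bound is genuine, and you are right not to paper over it. Proposition~\ref{pro:lower bound for delta} together with Theorem~\ref{thm:f for alpha} yields only $f_\delta(t)\ge f_\alpha(t)\ge\log\left(1+t/d_G(0)\right)$, and no argument can upgrade this to the displayed $\log\frac{d_G(0)+t}{d_G(0)-t}$ for all starlike $G$: after clearing denominators the displayed upper bound also simplifies to exactly $\log\frac{d_G(0)+t}{d_G(0)-t}$, so the theorem as stated would force $f_\delta$ to equal that expression identically, and Lemma~\ref{lem:formula for delta in special domains}(1) already contradicts it --- in $G_2$ one has $\delta_{G_2}(0,y)=\log\left(1+t/d_{G_2}(0)\right)$, which is strictly smaller than $\log\frac{d_{G_2}(0)+t}{d_{G_2}(0)-t}$ for $t>0$. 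Your proposed repair via symmetric boundary tips only works in domains that happen to possess them (such as the domain $G'$ from the Apollonian proof), not in general. The correct lower bound is $f_\delta(t)\ge\log\left(1+t/d_G(0)\right)$, which is what the paper's citation of Proposition~\ref{pro:lower bound for delta} and Theorem~\ref{thm:f for alpha} actually delivers and which still suffices for $f_\delta'(0)\ge 1/d_G(0)$; the defect is in the theorem's displayed formula, not in your argument.
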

\begin{proof}
  Lower bound for $f_\delta(t)$ and $f_\delta'(0)$ follow from Proposition \ref{pro:lower bound for delta} and Theorem \ref{thm:f for alpha}.
  
  For the upper bound we use Lemma \ref{lem:upper bound for delta} to obtain
  \begin{eqnarray*}
    \delta_G(0,y) & \le & \log \left( 1+\frac{|y|}{d_G(0)}+\frac{|y|}{d_G(y)}+\frac{|y|^2}{d_G(0)d_G(y)} \right)\\
    & \le & \log \left( 1+\frac{t}{d_G(0)-t}+\frac{t}{d_G(0)} +\frac{t^2}{d_G(0)(d_G(0)-t)} \right)
  \end{eqnarray*}
  for $t \in [0,d_G(0))$. Differentiation and taking the limit gives $2d_G(0)/(d_G(0)^2-t^2) \to 2/d_G(0)$ as $t \to 0$, which proves the upper bound for $f_\delta'(0)$.
  
  Finally, we show that the bounds for $f_\delta'(0)$ are best possible. Sharpness of the lower bound occurs in the domain $G_2$ defined in \eqref{eqn:domain G2}. By Lemma \ref{lem:formula for delta in special domains} (1)
  \[
    \delta_{G_2}(0,y) = \log \left( 1+\frac{t}{d_G(0)} \right)
  \]
  and by differentiation we obtain $1/(d_G(0)+t)$. Now as $t \to 0$ we get the $1/(d_G(0)$.
  
  Sharpness of the upper bound occurs in the domain $G_3$ defined in Lemma \ref{lem:formula for delta in special domains}. Now by Lemma \ref{lem:formula for delta in special domains} (2)
  \[
    \delta_{G_3}(0,y) = \log \left( 1+\frac{2 t d_G(0)}{d_G(0)(d_G(0)+t)} \right).
  \]
  By differentiation and taking the limit we obtain $2 d_G(0)/((d_G(0)+t)(d_G(0)+3t)) \to 2/d_G(0)$ as $t \to 0$.
\end{proof}

\section{Visual angle distance, $v_G$ ($\tau_G$)}

Let $G \subsetneq \Rn$ be a domain. For $u,w \in G$ we define the visual angle distance by
\[
  v_G(u,w) = \sup_{a \in \partial G} \measuredangle(u,a,w).
\]
The visual angle distance was introduced in the 2010's in \cite{KleLinVuoWan2014}. Clearly $v(u,w) \in [0,\pi]$ and therefore we define for $u,w \in G$ a hyperbolic type distance
\[
  \tau_G(u,w) = \tan \frac{v_G(u,w)}{2}.
\]

\begin{proposition}\label{pro:increasing visual angle distance}
  Let $l \subset \Rn \setminus \{ 0 \}$ be a line and $u,w \in l$. Then the function
  \[
    h(\beta) = v_{\Rn \setminus \{ 0 \}}(u,u+\beta (w-u) )
  \]
  is strictly increasing for $\beta \ge 0$.
\end{proposition}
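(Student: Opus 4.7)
The plan is to reduce the statement to an explicit one-variable calculation using $\arctan$, by exploiting the fact that $\mathbb{R}^n \setminus \{0\}$ has only one boundary point.

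First I would observe that since $\partial(\mathbb{R}^n \setminus \{0\}) = \{0\}$, the supremum in the definition of $v_{\mathbb{R}^n \setminus \{0\}}$ is attained trivially, so $h(\beta) = \measuredangle(u,\,0,\,q(\beta))$ where $q(\beta) = u + \beta(w-u)$. We may assume $u \neq w$ (else $h \equiv 0$ and there is nothing to prove). Because $l \subset \mathbb{R}^n \setminus \{0\}$, the origin is not on $l$, so the perpendicular distance $d$ from $0$ to $l$ is strictly positive.

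Next I would parametrize $l$ by its foot of perpendicular: let $P$ be the foot of the perpendicular from $0$ onto $l$ and $e$ a unit vector along $l$, and write $u = P + t_u e$, $w = P + t_w e$, so that $q(\beta) = P + s(\beta) e$ with $s(\beta) = t_u + \beta(t_w - t_u)$. Since $u \neq w$, the affine function $s$ is strictly monotonic. Working in the $2$-plane spanned by $0$, $P$, $e$ (which contains $l$ and $0$), every ray from $0$ through a point $P + t e$ makes signed angle $\arctan(t/d) \in (-\pi/2,\pi/2)$ with the ray $\overrightarrow{0P}$. The unsigned angle between two such rays is the absolute value of the difference of these signed angles, so
\[
  h(\beta) = \bigl|\arctan(s(\beta)/d) - \arctan(t_u/d)\bigr|.
\]

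Finally, since $\arctan$ is strictly increasing on $\mathbb{R}$ and $s$ is strictly monotonic and linear with $s(0) = t_u$, the inner difference $\arctan(s(\beta)/d) - \arctan(t_u/d)$ is a strictly monotonic function of $\beta$ that vanishes at $\beta = 0$; hence for $\beta \ge 0$ it has constant sign and its absolute value $h(\beta)$ is strictly increasing, as claimed. The one point that needs care is the identification of the unsigned angle with the absolute value of the difference of the signed angles when $q(\beta)$ crosses from one side of $P$ to the other, but since both signed angles lie in $(-\pi/2,\pi/2)$ their difference lies in $(-\pi,\pi)$ and the identification is automatic; this is the only conceptual step, and it is not really an obstacle.
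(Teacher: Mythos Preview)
Your proof is correct and takes a genuinely different route from the paper's. The paper fixes the vertex at $u$ and uses the law of sines: since the angle $\measuredangle(a,u,0)$ at $u$ stays constant along the ray $\{u+\beta(w-u):\beta\ge 0\}$, one gets
\[
  |u-a| \;=\; |u|\,\frac{\sin\measuredangle(u,0,a)}{\sin\bigl(c-\measuredangle(u,0,a)\bigr)},\qquad c=\pi-\measuredangle(a,u,0),
\]
and then checks by differentiation that $\gamma\mapsto \sin\gamma/\sin(c-\gamma)$ is strictly increasing on $(0,c)$; since $|u-a|$ is strictly increasing in $\beta$, so is $\measuredangle(u,0,a)$. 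Your argument instead drops the perpendicular from $0$ to $l$ and writes the angle at $0$ explicitly as $|\arctan(s(\beta)/d)-\arctan(t_u/d)|$, reducing everything to the strict monotonicity of $\arctan$ composed with an affine function. Your approach is more elementary (no law of sines, no derivative computation) and handles uniformly the case where $q(\beta)$ passes the foot $P$, which you note explicitly; the paper's approach has the mild advantage that it never needs to discuss sign conventions, since it works with the manifestly increasing Euclidean length $|u-a|$ throughout. One cosmetic point: when $u=w$ the function $h$ is identically zero and hence not \emph{strictly} increasing, so ``nothing to prove'' is not quite the right phrase---but this degenerate case is excluded implicitly by the statement (the paper's own proof also tacitly assumes $u\neq w$).
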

\begin{proof}
  Now $v_{\Rn \setminus \{ 0 \}}(u,a) = \measuredangle (u,0,a)$ and the point $a = a(\beta) = u+\beta (w-u)$ moves along the line $l$. It is easy to see that $a(0) = u$, $a(1) = w$ and $|u-a|$ is strictly increasing as a function of $\beta$.
  
  By the law of sines
  \[
    \frac{|u-a|}{\sin \measuredangle(u,0,a)} = \frac{|u|}{\sin \measuredangle(0,a,u)}
  \]
  and since $\measuredangle(a,u,0)$ is constant we obtain for $c=\pi-\measuredangle(a,u,0) \in (0,\pi)$
  \[
    |u-a| = |u| \frac{\sin \measuredangle(u,0,a)}{\sin \measuredangle(0,a,u)} = |u| \frac{\sin \measuredangle(u,0,a)}{\sin ( c-\measuredangle(u,0,a) )} = |u| \frac{v_{\Rn \setminus \{ 0 \}}(u,a)}{\sin ( c-v_{\Rn \setminus \{ 0 \}}(u,a) )}.
  \]
  Now $|u|$ is a constant and $|u-a|$ is strictly increasing. Thus we need to show that for $c \in (0,\pi)$ the function
  \[
    h(\gamma) = \frac{\gamma}{c-\gamma}
  \]
  is strictly increasing in $(0,c)$. By differentiation we obtain
  \[
    h'(\gamma) = \frac{\sin c}{(\sin(c-\gamma))^2} > 0
  \]
  and the assertion follows.
\end{proof}

In Proposition \ref{pro:increasing visual angle distance} the line $l$ did not contain 0. If $l \subset \Rn$ with $0 \in l$, then for any $u,w \in l \setminus \{ 0 \}$ we have
\begin{equation}\label{eqn:v special case}
  v_{\Rn \setminus \{ 0 \}}(u,w ) = \left\{ \begin{array}{ll}
    0, & \textrm{if } 0 \not\in [u,w],\\
    \pi, & \textrm{if } 0 \in [u,w].\\
  \end{array}   \right.
\end{equation}

\begin{theorem}\label{thm:tau distance}
  Let $G \subsetneq \Rn$ be a starlike domain. Then
  \[
    0 \le f_\tau(t)
  \]
  for all $t \in [0,|z|)$ and
  \[
    f_\tau(t) \le \frac{t}{\sqrt{d_G(0)^2-t^2}+d_G(0)}
  \]
  for all $t \in [0,d_G(0))$. Moreover, the bounds for $f_\tau(t)$ are best possible and $0 \le f_\tau'(0) \le 1/(2 d_G(0))$.
\end{theorem}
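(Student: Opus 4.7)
The plan is to exploit the inclusion $B^n(0,d_G(0))\subseteq G$, so that every boundary point of $G$ lies at Euclidean distance at least $d:=d_G(0)$ from the origin; this one geometric constraint will control how large the visual angle at a boundary point can be. The lower bound $f_\tau(t)\ge 0$ is immediate from $v_G\in[0,\pi]$ and the nonnegativity of $\tan$ on $[0,\pi/2)$.

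For the upper bound I would first prove the geometric lemma: if $|a|\ge d$ and $|y|=t<d$, then $\measuredangle(0,a,y)\le\arcsin(t/d)$. The argument runs via the law of sines applied to the triangle with vertices $0,y,a$: one has
\[
  \sin\measuredangle(0,a,y)=\frac{|0-y|}{2R}=\frac{t}{2R},
\]
where $R$ is the circumradius of that triangle. The circumcircle passes through the origin, so every point on it lies at distance at most the diameter $2R$ from $0$; in particular $|a|\le 2R$, and hence $2R\ge d$. The degenerate collinear case is immediate from $|a|>t$ (which forbids $a$ from lying between $0$ and $y$). Taking the supremum over $a\in\partial G\subseteq\Rn\setminus B^n(0,d)$ then gives $v_G(0,y)\le\arcsin(t/d)$, and the tangent half-angle identity
\[
  \tan\frac{\arcsin(t/d)}{2}=\frac{t/d}{1+\sqrt{1-t^2/d^2}}=\frac{t}{d+\sqrt{d^2-t^2}}
\]
converts this to the stated upper bound on $f_\tau(t)$.

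For the sharpness claims I would use two examples. The upper bound is attained at $G=B^n(0,d)$: a direct calculus maximisation of $\measuredangle(0,a,y)$ over $a\in S^{n-1}(0,d)$ yields $v_G(0,y)=\arcsin(t/d)$, equivalently, equality in the circumradius argument holds precisely when the circle through $\{0,y,a\}$ has diameter exactly $d$. The lower bound is attained at the starlike domain $G=\Rn\setminus\{z\}$: here $\partial G=\{z\}$ and $0,y,z$ are collinear with $y$ strictly between $0$ and $z$, so the analogue of \eqref{eqn:v special case} forces $v_G(0,y)=0$, and $f_\tau\equiv 0$.

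The derivative estimates then follow by sandwiching: from $0\le f_\tau(t)\le t/(d+\sqrt{d^2-t^2})$ together with $f_\tau(0)=0$, differentiating the upper bound and letting $t\to 0^+$ gives the limit $1/(2d)$, so $0\le f_\tau'(0)\le 1/(2d_G(0))$, and both extremes are realised by the two example domains above. The main obstacle is the circumradius lemma; once one spots that $|a|\le 2R$ and combines it with $|a|\ge d$, the rest of the proof is routine half-angle bookkeeping.
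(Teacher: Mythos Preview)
Your argument is correct and follows the same overall scheme as the paper: bound $v_G(0,y)$ by the value in the ball $B^n(0,d_G(0))$, convert via the half-angle identity, and read off the derivative by sandwiching. The differences are in the details. For the key inequality $v_G(0,y)\le\arcsin(t/d)$ the paper simply quotes $v_{\Bn}(0,u)=\arcsin|u|$ from \cite{KleLinVuoWan2014} and uses the inclusion $B^n(0,d_G(0))\subset G$, whereas you give a self-contained proof via the law of sines and the circumradius bound $2R\ge|a|\ge d$; your route is more elementary and avoids the external citation (one small point you leave implicit: since $|a|>t$ the side $[0,y]$ cannot be the longest, so the angle at $a$ is at most $\pi/2$ and $\arcsin$ applies). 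For sharpness of the lower bound the paper uses the cone-complement $G_2=\Rn\setminus S_{\pi/4,x,2x}$, while you use the simpler punctured space $\Rn\setminus\{z\}$; both are starlike under the paper's definition, and yours makes the collinearity argument a one-liner. Neither proof addresses whether $f_\tau'(0)$ actually exists in a general starlike $G$; both treat the sandwich bound as settling the matter.
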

\begin{proof}
  By \eqref{eqn:v special case} we note that $0 \le f_\tau(t)$ for all $t \in [0,|z|)$ and by Proposition \ref{pro:increasing visual angle distance} we obtain $0 \le f_\tau'(t)$ for all $t \in [0,|z|)$. The lower bound $0$ is obtained in the domain $G_2$ introduced in \eqref{eqn:domain G2}.
  
  The upper bound for $f_\tau(t)$ is attained in $G'=B^n(0,d(0))$. By \cite[(3.3)]{KleLinVuoWan2014}, $v_\Bn(0,u) = \arcsin |u|$ and thus
  \[
    \tau_{G'}(0,u) = \tan \frac{\arcsin \frac{t}{d_G(0)}}{2} = \frac{t}{\sqrt{d_G(0)^2-t^2}+d_G(0)} \ge \tau_G(0,u) = f_\tau(t).
  \]
  By differentiation and taking the limit we obtain
  \[
    \frac{d_G(0)}{d_G(0) \sqrt{d_G(0)^2-t^2}+d_G(0)^2-t^2} \to \frac{1}{2d_G(0)}
  \]
  as $t \to 0$.  
\end{proof}

\begin{remark}\label{rem:tau remark}
  As in Remark \ref{rem:sigma remark}, Theorem \ref{thm:tau distance} suggests that we could define $\widetilde{\tau}(u,w) = 2 \tau(u,w)$. For this distance function $f_{\widetilde{\tau}}'(0) \le 1/d_G(0)$.
\end{remark}

\section{Discussion}

In this final section we prove our main result and consider the estimates in domains other than starlike.

Until now we have considered starlike domains and found estimates for the function $f_m(t)$. Moreover, the upper bound for $f_m(t)$ is obtained only when $t$ is close to the origin ($t \in [0,d_G(0))$), whereas the lower bound is valid also for large values ($t \in [0,|z|)$).

Let now $G \subsetneq \Rn$ be any domain with $0 \in G$ and $z \in \partial G$. Our results hold also in $G$, when $y \in B^n(0,d_G(0))$. In other words, the lower and upper bounds for $f_m(t)$ are true for $t \in [0,d_G(0))$ and the results for $f_m'(0)$ are also true. We initially choose $0$ to simplify notation. It could be any point in $G$.

\begin{proof}[Proof of Theorem \ref{thm:main theorem}]
  The result for $j_G$ follows from Theorem \ref{thm:distance ratio estimates}, for $k_G$ from Theorem \ref{thm:quasihyperbolic estimates}, for $\widetilde{\sigma}_G$ form Theorem \ref{thm:sigma distance} and Remark \ref{rem:sigma remark}, and for $c_G$ from \ref{thm:Cassinian distance}.
  
  The result for $\alpha_G$ follows form Theorem \ref{thm:f for alpha}, for $\delta_G$ from \ref{thm:seittenranta distance}, and for $\widetilde{\tau}_G$ from Theorem \ref{thm:tau distance} and Remark \ref{rem:tau remark}.
\end{proof}

In Theorem \ref{thm:main theorem} we assumed $z \in \partial G$. This is not needed, if we only consider $f(t)$ close to the origin. We can define for any $z \in \Rn \setminus \{ 0 \}$
\[
  F_m(t) = m(0,t \tfrac{z}{|z|}), \quad t \in [0,d_G(0)).
\]
As an application of Theorem \ref{thm:main theorem} we obtain
\begin{corollary}
  Let $G \subsetneq \Rn$ be a domain with $0 \in G$. For $m \in \{ j,k,\widetilde{\sigma}, c \}$
  \[
    F_m'(0) = \frac{1}{d_G(0)}
  \]
  and for $m \in \{ \alpha, \delta \}$
  \[
    \frac{1}{d_G(0)} \le F_m'(0) \le \frac{2}{d_G(0)},
  \]
  where the upper and lower bounds are best possible, and
  \[
    0 \le F_{\widetilde{\tau}}'(0) \le \frac{1}{d_G(0)}.
  \]
\end{corollary}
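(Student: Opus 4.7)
The plan is to deduce the corollary directly from Theorem \ref{thm:main theorem} by restricting attention to the inscribed ball $B^n(0, d_G(0))$. For $t \in [0, d_G(0))$ the point $tz/|z|$ has Euclidean norm $t < d_G(0)$, so it lies in this ball and in particular in $G$; the segment $[0, tz/|z|]$ is likewise contained in $G$. This is essentially the only way the starlike hypothesis was used in Sections 3--9, so that hypothesis becomes redundant once we restrict $t$ to $[0, d_G(0))$.

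The main step is to revisit Theorems \ref{thm:distance ratio estimates}, \ref{thm:quasihyperbolic estimates}, \ref{thm:sigma distance}, \ref{thm:Cassinian distance}, \ref{thm:f for alpha}, \ref{thm:seittenranta distance} and \ref{thm:tau distance} and observe that every upper and lower bound they produce on $f_m(t)$, for $t < d_G(0)$, rests on (i) the segment $[0, tz/|z|] \subset G$, and (ii) the universal triangle-inequality estimates $d_G(0) - t \le d_G(y) \le d_G(0) + t$ for $y$ on this segment, together with monotonicity lemmas such as Lemma \ref{lem:apollonian estimate} and Lemma \ref{lem:upper bound for delta} that compare suprema over $\partial G$ with suprema over the inscribed sphere $S^{n-1}(0, d_G(0))$. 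All of these remain valid for an arbitrary domain, so the bounds on $f_m$ transfer verbatim to $F_m$ on $[0, d_G(0))$, and differentiation at $t = 0$ then yields the claimed bounds on $F_m'(0)$.

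Sharpness is inherited automatically: the extremal domains appearing in Sections 3--9 — namely $G_1$, $G_2$, $G_3$, and the inscribed ball $B^n(0, d_G(0))$ — are themselves starlike, hence admissible choices of $G$ in the corollary, so they witness sharpness in the present, more general setting as well. The principal obstacle is therefore not conceptual but one of bookkeeping: one must verify, distance by distance, that nothing in the starlike proofs secretly relied on $[0, z) \subset G$ for a boundary point $z$, rather than merely on the weaker inclusion $[0, tz/|z|] \subset G$ for $t < d_G(0)$. The delicate cases are the Apollonian, Seittenranta and visual angle distances, where one must check that the suprema over $\partial G$ are controlled by the inscribed-sphere comparisons already established in the respective lemmas; once this is confirmed, the proof is complete.
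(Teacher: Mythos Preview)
Your proposal is correct and follows the same approach as the paper. The paper presents the corollary as an immediate application of Theorem~\ref{thm:main theorem}, noting in the preceding paragraph that the results hold in any domain $G$ once $y \in B^n(0,d_G(0))$; your write-up spells out in more detail why the individual proofs in Sections~4--10 survive this restriction, but the underlying idea---replace the starlike hypothesis by the inclusion $B^n(0,d_G(0)) \subset G$ and observe that all estimates used only $d_G(0)-t \le d_G(y) \le d_G(0)+t$ and comparisons with the inscribed sphere---is the same.
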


If we restrict to starlike John domains, then we can easily get lower bound for $f_m(t)$. A domain $G$ is $C$-John domain, $C \ge 1$, if there is a distinguished point $u_0 \in G$ such that any $u \in G$ can be connected to $u_0$ by a rectifiable curve $\gamma \colon [0,l] \to G$, which is parametrised by arclength and with $\gamma(0)=u$, $\gamma(l)=u_0$ and
\[
  \textrm{dist} (\gamma(t'),\partial G) \ge \frac{1}{c}t'
\]
for every $t' \in [0,l]$. We can choose $u_0 = 0$ and it is easy to see that for example $g(t) \ge \frac{1}{c}t' = \frac{1}{c}(|z|-t)$.

We consider next monotonicity of the function $f_m(t)$. By \cite[Theorem 4.8]{Kle2008} the metric balls $B_j(0,r) = \{ u\in G \colon j_G(0,u)<r \}$ are starlike whenever $G$ is starlike. This implies that in starlike domains $f_j(t)$ is an increasing function. Same is also true for the quasihyperbolic distance \cite[Theorem 2.10]{Kle2008b}, the Apollonian distance \cite[Theorem 3.5]{Kle2013}, the triangular ratio distance \cite[Theorem 1.2]{HarKleVuo15} and the visual angle distance (by definition). For the Seittenranta distance this is not know \cite[Open problem 4.10 (1)]{Kle2013}.

Monotonicity is not true for the function $g(t)$. However, in convex domains $g(t)$ behaves well: there exists a point $a \in [0,|z|)$ such that $g(t)$ is increasing on $[0,a]$ and decreasing on $[a,|z|)$. The next example shows that $g(t)$ has not this property in starlike domains.

\begin{example}\label{exm:comb}
  We show that $g(t)$ misbehaves in the domain
  \[
    G = \Bn \setminus \left( \bigcup_{l=0}^\infty \left[a_l,\tfrac{a_l}{|a_l|}\right]  \right), \quad a_l = (1-2^{-l})e_1 + 2^{-(l+1)} e_2,
  \]
  for $z = 1 \in \partial G$. $G$ is not strictly starlike, but by replacing line segments $[a_l,a_l / |a_l|]$ with angular domains $S_{\alpha_l,a_l,a_l / |a_l|}$, where $\alpha_l$ is small enough, we could construct a strictly starlike domain with the same effect. We stick to the line segment version to make the computation easier to follow. We demonstrate that on each interval $[1-2^{-l},1-2^{-(l+1)}]$ the function $g(t)$ obtains its maximum in $(1-2^{-l},1-2^{-(l+1)})$ and $g(1-2^{-1}) > g(1-2^{-(l+1)})$. This means that $g(t)$ has infinitely many local maxima and minima.
  
  \begin{figure}[ht!]
    \begin{center}
      \includegraphics[width=0.3\textwidth]{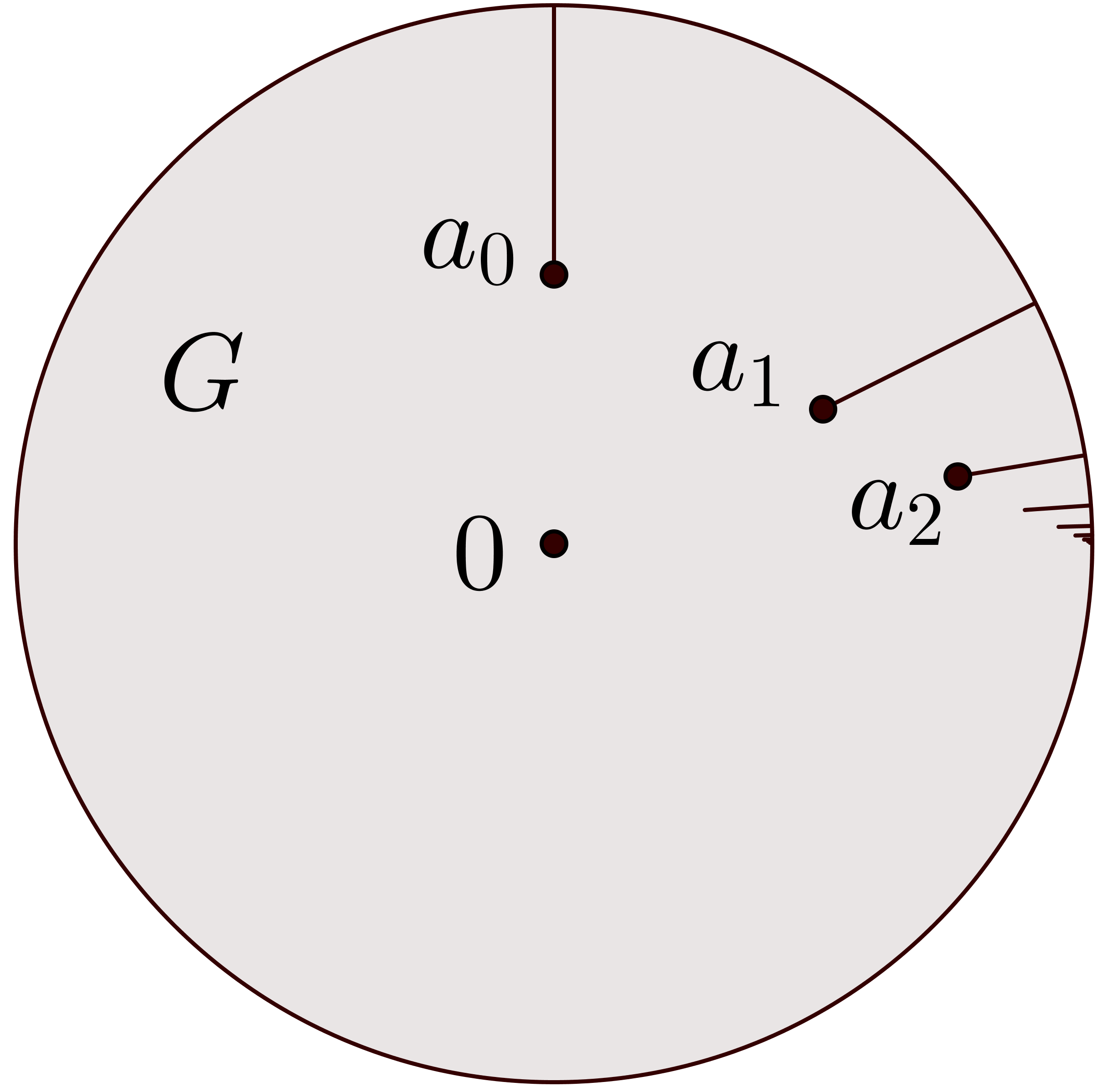}\hspace{5mm}
      \includegraphics[width=0.5\textwidth]{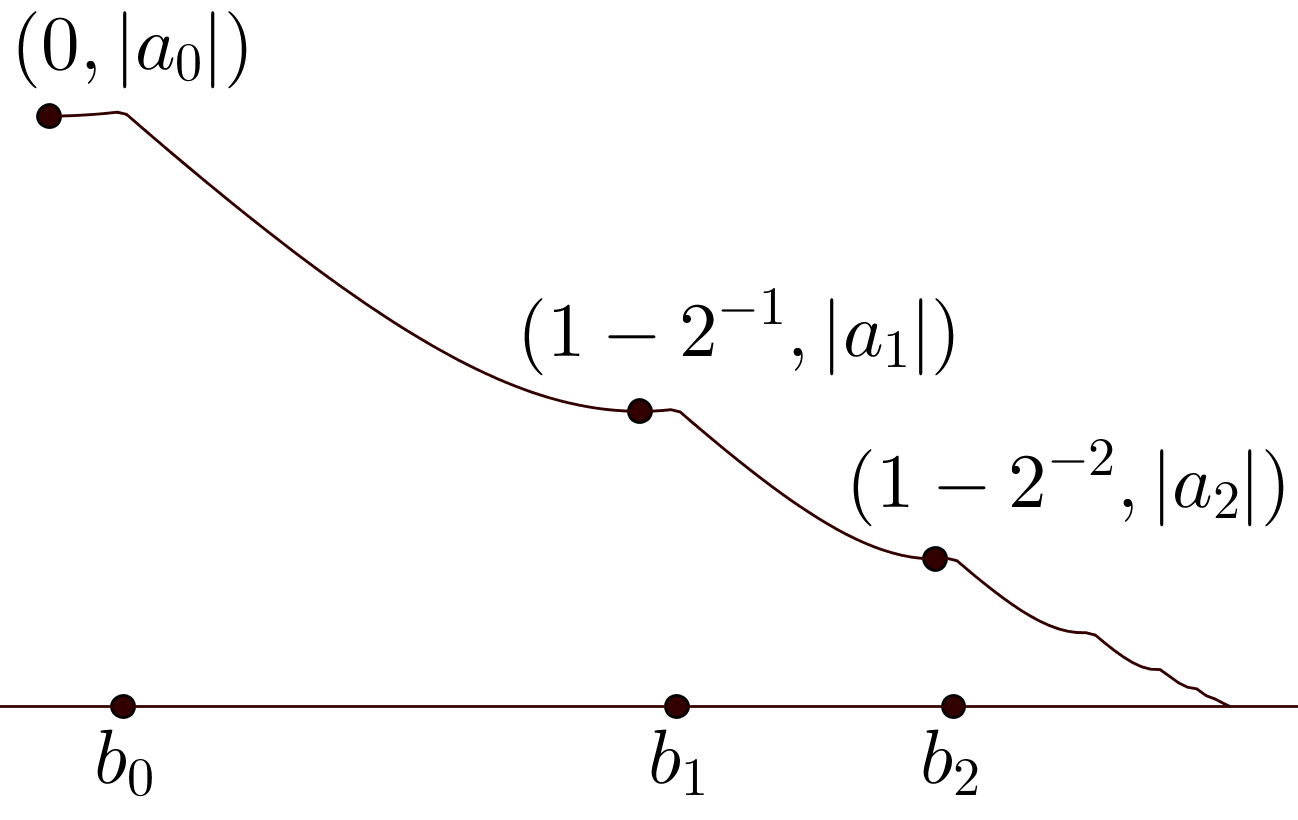}
      \caption{Left: The comb domain $G$ of Example \ref{exm:comb}. Right: Function $g(t)$ in Example \ref{exm:comb}.}
    \end{center}
  \end{figure}
 
  Let us fix $l \in \{ 0,1,2,\dots \}$. Now $g(1-2^{-l}) = 2^{-(l+1)}$ and thus $g(1-2^{-(l+1)}) = 2^{-(l+2)} < g(1-2^{-l})$. We show that for $b_l = 1-\tfrac{7}{8} 2^{-(l+1)}$,
  \begin{equation}\label{eqn:comb inequality}
    g(b_l) > g(1-2^{-l}).
  \end{equation}
  By construction of $G$ it is clear that
  \[
    g(b_l) = \min \{ |b_le_1-a_l|,d(b_le_1,(a_l,a_l/|a_l|]),|b_le_1-a_{l+1}| \}.
  \]
  By geometry we have
  \begin{equation}\label{eqn:comb estimate1}
    d(b_le_1,(a_l,a_l/|a_l|]) \ge 2^{-(l+1)} = g(1-2^{-l}).
  \end{equation}
  We calculate
  \[
    |b_le_1-a_l| = \sqrt{(2^{-(l+1)}/8)^2 + (2^{-(l+1)})^2} = \sqrt{65} \cdot 2^{-(l+4)}
  \]
  and
  \[
    |b_le_1-a_{l+1}| = \sqrt{(7 \cdot 2^{-(l+1)}/8)^2 + (2^{-(l+2)})^2} = \sqrt{65} \cdot 2^{-(l+4)}
  \]
  implying
  \begin{equation}\label{eqn:comb estimate2}
    g(b_l) = \sqrt{65} \cdot 2^{-(l+4)} > \sqrt{64} = 2^{-(l+1)} = g(1-2^{-l}).
  \end{equation}
  Inequalities \eqref{eqn:comb estimate1} and \eqref{eqn:comb estimate2} imply \eqref{eqn:comb inequality} and thus $g(t)$ misbehaves (it has infinitely many local maxima and minima). \hfill $\triangle$
\end{example}

\textbf{Acknowledgements.} The author wishes to thank M. Vuorinen for asking the question about growth of hyperbolic type distances in starlike domains.


\noindent\small{\textsc{R. Kl\'en}}\\
\small{Department of Mathematics and Statistics,
FI-20014 University of Turku, Finland, and Institute of Natural and Mathematical Sciences, Massey University, Auckland, New Zealand}\\
\footnotesize{\texttt{riku.klen@utu.fi}}

\end{document}